\newtheorem{theorem}{Theorem}[section]
\newtheorem{lemma}[theorem]{Lemma} \newtheorem{corollary}[theorem]{Corollary}
\newtheorem{proposition}[theorem]{Proposition}
\newtheorem{remark}[theorem]{Remark}
\numberwithin{equation}{section}
\def \a{{\alpha}}
\def \b{{\beta}}
\def \d{{\delta}}
\def \e{{\varepsilon}}
\def \g{{\gamma}}
\def \o{{\omega}}
\def \O{{\Omega}}
\def \p{{\varphi}}
\def \m{{\mu}}
\def \s{{\sigma}}
\def \dd{{\rm d}}
\def \qq{{\qquad}}
\def \noi{{\noindent}}
\def\E{{\mathbb E \,}}
\def\R{{\mathbb R}}
\def\N{{\mathbb N}}
\font\cmit=cmti9 at 10,6 pt
\font\sevenrm= cmr10 at 8 pt
\font\rm= cmr10 at 10,6 pt  \scrollmode
 \font\cmssqi=cmssqi8 at 9,5 pt
\def\ddate {\sevenrm \ifcase\month\or January\or
February\or March\or April\or May\or June\or July\or
August\or September\or October\or November\or December\fi\! {\the\day}, \!{\sevenrm\the\year}}
 \title[An extension of  a  result of Erd\H os and Zaremba]
 {An extension of  a  result of Erd\H os and Zaremba}
\begin{document}
 \author{Michel J.\,G.  WEBER}
 \address{IRMA, UMR 7501,   7  rue  Ren\'e Descartes, 67084 Strasbourg Cedex, France.}
\email{michel.weber@math.unistra.fr}



\keywords{ Arithmetical function, Erd\H os-Zaremba's function, Davenport's function.
\vskip 1 pt    \emph{2010 Mathematical Subject Classification}: Primary  11D57,    Secondary  11A05, 11A25.}

  \begin{abstract}
  Erd\"os and  Zaremba  showed that
$ \limsup_{n\to \infty} \frac{\Phi(n)}{(\log\log n)^2}=e^\g$, $\g$ being Euler's constant, where $\Phi(n)=\sum_{d|n} \frac{\log d}{d}$. 

We  extend this result to the function $\Psi(n)= \sum_{d|n} \frac{(\log d )(\log\log d)}{d}$ and some other  functions.   We show
that
$    \limsup_{n\to \infty}\, \frac{\Psi(n)}{(\log\log n)^2(\log\log\log n)}\,=\, e^\g$.
 The proof
 requires  a new approach. As  an application, we prove  that    for any    $\eta>1$, any finite sequence of reals $\{c_k, k\in K\}$,
$\sum_{k,\ell\in K} c_kc_\ell \, \frac{\gcd(k,\ell)^{2}}{k\ell} \le  C(\eta) \sum_{\nu\in K} c_\nu^2(\log\log\log \nu)^\eta \Psi(\nu)
$,
where $C(\eta)$ depends on $\eta$ only. This
 improves  a recent result obtained by the author.

  \end{abstract}

\maketitle

 \section{\bf Introduction.} \label{s1}
  \rm 
Erd\"os and  Zaremba   showed in  \cite{EZ} the following result concerning  the arithmetical function $\Phi(n)=\sum_{d|n} \frac{\log d}{d}$,
 \begin{equation}\label{EZ1}\limsup_{n\to \infty} \frac{\Phi(n)}{(\log\log n)^2}=e^\g,
\end{equation}
  where  $\g $ is Euler's constant. This function appears in the study of good lattice points in numerical integration, see Zaremba \cite{Z}. The proof
  is based on the identity
\begin{eqnarray}\label{formule}\Phi(n)
=\sum_{i=1}^r \sum_{\nu_i=1}^{\a_i}\frac{\log p_i^{\nu_i}}{p_i^{\nu_i}}\sum_{\d|n p_i^{-\a_i}}\frac{1}{\d} , \qq\qq ( n= p_1^{\a_1}\ldots p_r^{\a_r}),
\end {eqnarray}
which follows from
\begin{eqnarray}\label{base} \sum_{d|n} \frac{\log d}{d}&=& \sum_{\m_1=0}^{\a_1}\ldots \sum_{\m_r=0}^{\a_r} \frac{1}{ p_1^{\m_1}\ldots p_{r}^{\m_{r}}}\Big(\sum_{i=1}^{r}\m_i\log p_i\Big) .
\end{eqnarray}
 Let $h(n)$ be non-decreasing on integers, $h(n)= o(\log n)$, and
  consider the slightly larger function
\begin{eqnarray}\label{phi}\Phi_h(n)=\sum_{d|n} \frac{(\log d )\,h(d)}{d^{}}.
\end{eqnarray}

In this case  a  formula similar to \eqref{base}   no longer holds, the "log-linearity" being  lost due to the extra factor $h(n)$. The study  of this function requires   a new approach.
 We study in this work the case $h(n)= \log\log n$, that is the function
\begin{eqnarray}\label{psi}\Psi (n)=\sum_{d|n} \frac{(\log d )(\log\log d)}{d^{}}.
\end{eqnarray}
 We extend Erd\H os-Zaremba's result for this function, as well as for the  functions
\begin{eqnarray*} \Phi_1(n)&=&\sum_{
p_1^{\m_1}\ldots p^{\m_{r}}_{r}|n}\frac{\sum_{i=1}^{r} \m_i(\log p_i)(\log\log p_i)}{p_1^{\m_1}\ldots p^{\m_{r}}_{r}}
\cr \Phi_2(n)&=&\sum_{d|n}\frac{(\log d)
\log\, \O(d) }{d},
\end{eqnarray*}
where $\O(d)$ denotes as usual   the total number of  prime factors of $d$ counting multiplicity. These    functions are linked to $\Psi$.
\vskip 2 pt
Throughout,    $\log\log x$  (resp. $\log \log\log x$) equals  $1$ if $0\le x \le e^{e}$ (resp. $0\le x \le e^{e^e}$), and  equals $\log\log x$ (resp. $\log \log\log x$) in the usual sense if $x> e^{e}$ (resp. $x> e^{e^e}$).
\vskip 3pt

One  verifies using standard arguments that
\begin{equation}  \label{phipsi}  \limsup_{n\to \infty}\, \frac{\Phi_1(n)}{(\log \log n)^2(\log\log\log n)}\,\ge \,e^\g, \qq
 \limsup_{n\to \infty}\, \frac{\Psi(n)}{(\log \log n)^2(\log\log\log n)}\,\ge \,e^\g,\end{equation}
and  in fact that
 \begin{equation}   \label{Phi1est}  \limsup_{n\to \infty}\, \frac{\Phi_1(n)}{(\log \log n)^2(\log\log\log n)}\,= \,e^\g .
\end{equation}

\vskip 2 pt
By the observation made after \eqref{base},
the corresponding extension of this result to $\Psi(n)$
is  technically more delicate. It follows from \eqref{EZ1} that
 \begin{equation}\label{trois} \limsup_{n\to \infty}\, \frac{\Psi(n)}{(\log \log n)^3}\, \le \, e^\g.
  \end{equation}
The   question thus arises  whether the exponent of   $\log\log n$  in \eqref{trois} can be replaced by $2+\e$, with $\e>0$ small.

\vskip 2 pt We  answer this question affirmatively by establishing the following precise result, which is the main result of this paper.

\begin{theorem}\label{t1}
\begin{equation*}
 \limsup_{n\to \infty}\, \frac{\Psi(n)}{(\log \log n)^2(\log\log\log n)}\, =\, e^\g.\end{equation*}
\end{theorem}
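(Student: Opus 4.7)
The lower bound follows from \eqref{phipsi}, so I focus on the upper bound. The central idea is Jensen's inequality applied to a weighted average of $\log\log d$.

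Introduce the probability weights $w(d) = (\log d)/(d\,\Phi(n))$ on $\{d\mid n : d\ge 2\}$, so that $\Psi(n)/\Phi(n) = \sum_{d\mid n} w(d)\log\log d$. By concavity of $\log$,
\begin{equation*}
\Psi(n) \le \Phi(n)\cdot\log\Bigl(\frac{S_2(n)}{\Phi(n)}\Bigr),\qquad S_2(n):=\sum_{d\mid n}\frac{(\log d)^2}{d}.
\end{equation*}

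To bound $S_2(n)/\Phi(n)$, consider the Dirichlet polynomial $F(s) = \sum_{d\mid n} d^{-s} = \prod_{p\mid n}(1-p^{-s(\a_p+1)})/(1-p^{-s})$. From $F(1)=\s(n)/n$, $-F'(1) = \Phi(n)$ and $F''(1) = S_2(n)$, two logarithmic differentiations (using $F''/F = (F'/F)^2 + \sum_p(\log F_p)''$) yield
\begin{equation*}
\frac{S_2(n)}{\Phi(n)} = B(n) + \frac{C(n)}{B(n)} + O(1),
\end{equation*}
where $B(n) := \Phi(n)\,n/\s(n)$ and $C(n) := \sum_{p\mid n}(\log p)^2 p/(p-1)^2$. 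The inequality $\log p\cdot p/(p-1)^2 \le 2\log P^+(n)/(p-1)$ gives $C(n)/B(n) \le 2\log P^+(n) + O(1)$.

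For well-balanced $n$ with $\log P^+(n) = O(\log\log n)$, the Chebyshev bound $\theta(P^+(n)) \le \log n$ and Mertens' theorem give $B(n) \le \log\log n + O(1)$; thus $S_2(n)/\Phi(n) = O(\log\log n)$, $\log(S_2(n)/\Phi(n)) \le \log\log\log n + O(1)$, and together with \eqref{EZ1} this gives $\Psi(n) \le (e^\g + o(1))(\log\log n)^2 \log\log\log n$. For unbalanced $n$, writing $n = p^\a m$ with $p = P^+(n)$ enormous, a direct expansion shows that $\Psi(n) - \Psi(m) = O((\log p / p)(\log\log n)^2) = o((\log\log n)^2\log\log\log n)$; since $\log\log n \ge \log\log m$, iterating reduces to the balanced case.

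The main obstacle is sharpness: on extremal primorial $n$ one has $B(n) \sim \log\log n$ and $C(n) \sim \tfrac12(\log\log n)^2$, so $S_2(n)/\Phi(n) \sim \tfrac32\log\log n$, and the additive slack $\log(3/2)$ in $\log(S_2(n)/\Phi(n))$ must be absorbed into the $(1+o(1))$ factor as $\log\log\log n \to \infty$ in order to recover the exact constant $e^\g$ rather than a larger multiple. Similar care is needed to keep the correction in the unbalanced reduction genuinely lower order.
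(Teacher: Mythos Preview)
Your approach is correct and genuinely different from the paper's. The paper applies convexity of $x\log x$ \emph{inside} each divisor, writing $\log d=\sum_i\mu_i\log p_i$ to obtain $\Psi(n)\le\Phi_1(n)+\Phi_2(n)$, where $\Phi_1$ carries the main term $e^\gamma(\log\log n)^2\log\log\log n$ and $\Phi_2(n)=\sum_{d\mid n}d^{-1}(\log d)\log\Omega(d)$ is shown to be of lower order via a lengthy chaining/binary-tree recursion (Section~\ref{s4}) together with a separate odd/even reduction (Section~\ref{s5}). Your route---concavity of $\log$ applied to the weighted average of $\log\log d$, then reading $S_2/\Phi$ off the second logarithmic derivative of the local Euler product---is considerably shorter and uses only Mertens-type input. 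What the paper gains is the standalone bound on $\Phi_2$ (Theorem~\ref{t2}); what you gain is economy.

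Two places in the sketch need tightening, though neither is a real gap. First, the $O(1)$ in $S_2/\Phi=B+C/B+O(1)$ is actually $(C+O(1))/B$, so you implicitly need $B(n)$ bounded below; when $B(n)=\sum_{p\mid n}\log p/(p-1)+O(1)$ is bounded one has $\Phi(n)=O(\log\log n)$ and hence $\Psi(n)\le\Phi(n)\log\log n=O((\log\log n)^2)$ directly, so this side case should be stated. Second, the unbalanced reduction is cleaner if you strip \emph{all} primes $p>(\log n)^C$ at once rather than ``iterating'' (which makes the threshold move with each step): the total defect is at most $(\log\log n)^2\sum_{p\mid n,\,p>(\log n)^C}\log p/p\le(\log\log n)^2(\log n)^{1-C}=o(1)$, and your balanced argument then applies to the cofactor $m$ using $P^+(m)\le(\log n)^C$ and $\log\log m\le\log\log n$. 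Your own final paragraph already isolates the decisive point: the multiplicative slack in $S_2/\Phi\asymp\log\log n$ becomes an additive $O(1)$ after the outer logarithm and is absorbed by $\log\log\log n\to\infty$, which is exactly why the sharp constant $e^\gamma$ survives.
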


An application of this result is given in Section \ref{s6}.
  The upper bound  is obtained, via the    inequality
  \begin{eqnarray}\label{convexdec} \Psi(n)\ \le \  \Phi_1(n) + \Phi_2(n),\end{eqnarray}
 as a combination of an estimate of   $\Phi_1(n)$ and  the following estimate of  $\Phi_2(n)$.  Recall that Davenport's function $w(n)$ is defined by  $w(n)=\sum_{p|n}\frac{\log p}{p}$.
According to Theorem  4 in  \cite{D} we have,
 \begin{equation}\label{wdavenport}\limsup_{n\to \infty}\frac{w(n)}{\log\log n}=1.
 \end{equation}

  Let also $\o(n)$ be  the number of prime divisors of $n$ counted without multiplicity.
\begin{theorem}\label{t2}For all   odd numbers $n$ we have,
\begin{eqnarray*}
\Phi_2(n)\, \le \, C\,  (\log\log\log \o(n))(\log \o(n))w(n)
.\end{eqnarray*}
where   $C$ is an absolute constant.\end{theorem}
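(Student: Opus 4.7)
The plan is to linearize $\log d = \sum_{p\mid d} v_p(d)\log p$ and swap the order of summation, obtaining
\[
\Phi_2(n)=\sum_{p\mid n}\log p\,\cdot\,R_p,\qquad R_p:=\sum_{d\mid n}\frac{v_p(d)\log\O(d)}{d}.
\]
The target reduces to proving $R_p\le Cp^{-1}(\log\o(n))(\log\log\log\o(n))$ for each prime $p\mid n$, since then summing $\log p\cdot R_p$ over $p\mid n$ produces the factor $w(n)$ on the right. Fixing $p$, I would factor each divisor $d\mid n$ as $d=p^{\nu}\d$ with $p\nmid\d$, so that $\d\mid M_p:=np^{-v_p(n)}$, $v_p(d)=\nu$, and $\O(d)=\nu+\O(\d)$. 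This gives
\[
R_p=\sum_{\nu=1}^{v_p(n)}\frac{\nu}{p^{\nu}}A_\nu(M_p),\qquad A_\nu(M):=\sum_{\d\mid M}\frac{\log(\nu+\O(\d))}{\d}.
\]
Using the elementary inequality $\log(\nu+\O(\d))\le\log 2+\log\nu+\log\O(\d)$ (valid for $\nu,\O(\d)\ge 1$; the case $\d=1$ contributes $\log\nu$ and is handled separately), I would split $A_\nu(M_p)\le(\log 2+\log\nu)\,\s_{-1}(M_p)+T(M_p)$, where $T(M):=\sum_{\d\mid M,\,\d>1}\d^{-1}\log\O(\d)$. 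Mertens's theorem applied to the smallest $\o(n)$ primes (the extremal configuration) gives $\s_{-1}(M_p)\le e^{\g}\log\o(n)+O(1)$.

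The main obstacle is estimating $T(M)$. My plan is to partition divisors of $M$ by $k=\o(\d)$, use $\log\O(\d)\le\log k+(\O(\d)-k)/k$ (from $\log(1+x)\le x$), and bound
\[
\sum_{\d\mid M,\,\o(\d)=k}\frac{1}{\d}\le e_k\bigl((q-1)^{-1}\bigr)_{q\mid M}\le\frac{s^k}{k!},\qquad s:=\sum_{q\mid M}\frac{1}{q-1}.
\]
Mertens once more, applied to the smallest primes, yields $s\le\log\log\o(n)+O(1)$. The leading contribution to $T(M)$ is the Poisson-type sum $\sum_{k\ge 1}(\log k)\,s^k/k!$. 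Splitting at $k_0:=2es$, the tail is $O(1)$ because $s^k/k!\le(es/k)^k\le 2^{-k}$ there, while the head is bounded by $(\log k_0)\,e^s\le(\log s+O(1))\,e^s$. Substituting $s=\log\log\o(n)+O(1)$ and $e^s=O(\log\o(n))$ yields $(\log s)\,e^s=O\bigl((\log\log\log\o(n))(\log\o(n))\bigr)$, the target order. The correction from $(\O(\d)-k)/k$ is controlled via the identity $\sum_{v\ge 1}(v-1)p^{-v}=(p-1)^{-2}$ and contributes only $O(e^s)=O(\log\o(n))$, absorbed in the main term since $\log\log\log x\ge 1$ by the paper's convention.

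Combining these estimates with the uniform bounds $\sum_{\nu\ge 1}\nu/p^\nu=p/(p-1)^2\le 4/p$ and $\sum_{\nu\ge 1}(\nu\log\nu)/p^\nu=O(1/p)$ (valid for all $p\ge 2$), I obtain $R_p\le Cp^{-1}(\log\o(n))(\log\log\log\o(n))$. Multiplying by $\log p$ and summing over $p\mid n$ then produces the desired inequality $\Phi_2(n)\le C(\log\log\log\o(n))(\log\o(n))\,w(n)$. The hypothesis that $n$ is even serves as a technical normalization, e.g., ensuring $w(n)\ge(\log 2)/2>0$ so that absolute constants can be absorbed without affecting the asymptotic shape of the bound.
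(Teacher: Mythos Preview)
Your plan is correct and reaches the same endpoint as the paper---the Poisson-type sum $\sum_{k\ge 1}(\log k)\,s^k/k!$ with $s=\log\log\o(n)+O(1)$, split at $k\asymp s$---but the route is genuinely different. The paper does not partition divisors by $\o(\d)$; instead, after the same initial linearization, it sets up a one-step recurrence $\Phi_s(h)\le\Phi_{s-1}(h)+b_s\Phi_{s-1}(h+1)+\b_s\Pi_{s-1}$ (Lemma~\ref{E3}) and iterates it $r-1$ times, tracking the resulting terms through a binary tree. The leaves of that tree, once collected, are exactly the elementary symmetric functions $\sum_{i_1<\dots<i_d}b_{i_1}\cdots b_{i_d}$, which are then bounded by $s^d/d!$ to recover the same Poisson sum. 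To control the individual recursion steps the paper needs several technical integral lemmas (Lemmas~\ref{phivar}--\ref{E2}) for sums of the form $\sum_\m \m\,\log(A+\m)\,e^{-\a\m}$; these require $\a\ge 1$, which is why the paper imposes $\min_i p_i\ge 3$ (there is a typo in the statement---the proof actually treats odd $n$, cf.\ condition~\eqref{min} and the first line of Section~\ref{s5}). Your approach sidesteps all of this: the symmetric-function bound $e_k\le s^k/k!$ delivers the Poisson sum in one stroke, and the linearization $\log\O(\d)\le\log\o(\d)+(\O(\d)-\o(\d))/\o(\d)$ cleanly separates the main term from a lower-order correction. The upshot is that your argument is shorter, avoids the parity restriction entirely, and makes the combinatorial structure more transparent; the paper's recurrence framework, on the other hand, is set up in a way that could in principle be adapted to other weights $h(d)$ where a direct symmetric-function count is less obvious.
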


Here and elsewhere $C$  (resp. $C(\eta)$) denotes some positive absolute constant (resp. some  positive constant depending only of a parameter  $\eta$).
\vskip 2 pt
The approach used for proving  Theorem \ref{t2} can be adapted with no  difficulty to other  arithmetical  functions of similar type.

 \vskip 5 pt  Before continuing we mention some other existing extensions,
 due to  Sitaramaiah and Subbarao in \cite{SS2,SS1}. For instance, the case when $\log d$ is replaced by a non-negative additive function $g$ (ie. $S(n)=\sum_{d|n} \frac{g(d)}{d}$) is studied in \cite{SS1}. In our case we note that $g(d)= (\log d)(\log\log d)$ (see \eqref{phi}), which is obviously not additive.   It is proved that if $T(d)$ is one of the three arithmetical functions $\frac{\o(d)}{d}$, $\frac{\O(d)}{d}$, $\frac{\log \tau(d)}{d}$, then
\begin{equation}\label{sisu1}\limsup_{n\to \infty} \frac{\sum_{d|n}T(d)}{(\log\log n)(\log\log \log n)}=c_T\,e^\g,
\end{equation}
where $c_T= 1$ in the two first cases, and  $c_T= \log 2$ in the third case.  See also Remark 2.3. A basis of their proof lies in the observation that $S(n)/\s_{-1}(n)$ is additive. Further it is   proved in \cite{SS2} that
 for each positive integer $k$,
 \begin{equation}\label{sisu2}\limsup_{n\to \infty} \frac{S_k(n)}{(\log\log n)^{k+1}} =c_k\,e^\g,\qq \qq (S_k(n)=\sum_{d|n}\frac{(\log d)^k}{d})
\end{equation}
where $c_k$ is a positive explicit constant. The proof is elegant and based on the  derivation formula $S_k(n)=(-1)^kf^{(k)}(1)$, where $f(u)= \s_{-u}(n)$ and $f^{(k)}$ is the $k$-th derivative of $f$, which is specific to these sums.
 \vskip 4 pt  The paper is organized as follows.
 Sections \ref{s4} and \ref{s5} form the main part of the paper, and consist of the proof of Theorem \ref{t2}, which is long and technical and involves the building of a binary tree (subsection  \ref{subsection4.2.1}). The proof of Theorem \ref{t1} is given in section \ref{s5}.
 Section \ref{s2} contains complementary results and the proofs of \eqref{phipsi}, \eqref{Phi1est}. Section \ref{s6} concerns the afore mentioned application of Theorem \ref{t1}.  Additional remarks and results are given  in Section \ref{s7}.

\vskip 2 pt \hskip -3 pt

 \section{\bf Proof of Theorem \ref{t2}.} \label{s4}

We use a chaining argument.     We make throughout  the convention   $0\log 0=0$.
\rm
\vskip 6 pt
Let $n= p_1^{\a_1}\ldots p_r^{\a_r}$ be an odd number. We will use repeatedly the fact that
 \begin{eqnarray}\label{min}\min_{i=1}^r p_i\ge 3.
\end{eqnarray}
 \vskip 10 pt

 We  note
  that
 \begin{eqnarray}\label{basic}
  \Phi_2(n)&=&\sum_{\m_1=0}^{\a_1}\ldots \sum_{\m_r=0}^{\a_r} \frac{1}{ p_1^{\m_1}\ldots p_{r}^{\m_{r}}}\sum_{i=1}^{r} \m_i\big(\log p_i\big)\log \Big(\sum_{i=1}^{r} \m_i \Big)
\cr &=& \sum_{i=1}^r\underbrace{\sum_{\m_1=0}^{\a_1}\ldots \sum_{\m_r=0}^{\a_r}}_{\substack{ \hbox{\small  the sum relative}\\ \hbox{\small to $ \m_i$ is excluded}  } } \underbrace{\frac{1}{ p_1^{\m_1}\ldots p_{r}^{\m_{r}}}}_{\substack{\hbox{\small  $ p_i^{\m_i}$}\\ \hbox{\small  is excluded}}}\ \Big(\sum_{\m_i=0}^{\a_i} \frac{\m_i\big(\log p_i\big)}{p_i^{\m_i}}\Big)\log \Big(\sum_{i=1}^{r} \m_i \Big)
.
\end{eqnarray}
   
As there is no order relation on the sequence $p_1, \ldots, p_r$, it suffices to  study   the sum
 \begin{eqnarray} \label{Phi_2(r,n)}
\Phi_2(r,n)&:=& \sum_{\m_1=0}^{\a_1}\ldots \sum_{\m_{r-1}=0}^{\a_{r-1}} \frac{1}{ p_1^{\m_1}\ldots p_{r-1}^{\m_{r-1}}}\sum_{\m_r=0}^{\a_r}\frac{\m_r \log p_r}{p_r^{\m_r}}\log \Big[\sum_{i=1}^{r-1} \m_i + \m_r\Big].\end{eqnarray}
The  sub-sums in \eqref{Phi_2(r,n)} will be estimated by using a recursion argument.

\vskip 3 pt
We first explain its principle and examine the structure of the sum $\Phi_2(r,n)$, anticipating somehow the calculations.
The last sum
$\sum_{\m_r=0}^{\a_r}\frac{\m_r \log p_r}{p_r^{\m_r}}\log \big[\sum_{i=1}^{r-1} \m_i + \m_r\big]$ is of  type
$$\sum_{\m=0}^{\a_r}\a \m \big(\log (A+\m)\big)e^{-\a \m}, \qq \qq \hbox{$\a =\log p_r$, \quad $A=\sum_{i=1}^{r-1}\m_i$}.$$
It is easy to observe  with \eqref{49} that the bound obtained in Lemma \ref{E1},   will induce on the sum in  $\m_{r-1}$ a logarithmic factor $\log \big[h+ \sum_{i=1}^{r-2} \m_i + \m_{r-1}\big]$ where $h$ is a positive integer, and  so one. More precisely,
  \begin{eqnarray*}
 & & \sum_{\m=0}^{\infty}\a \m \big(\log (A+\m)\big)e^{-\a \m}\ \le \  \a  \big(\log (A+1)\big)e^{-\a}+ 2\a  \big(\log (A+2)\big)e^{-2\a}
  \cr & &\qq   + \Big\{3\a \log (A+3)+3 \log (A+3)+  \frac{1}{\a} \log (A+3) +  \frac{1}{\a}
+ \frac{1}{\a^2(A+3)}\Big\} e^{-3\a}.\end{eqnarray*}
provided $A\ge 1$, and $\a \ge 1$. Whence the bound,
    \begin{eqnarray*}
& &  \frac{\log p_r}{p_r}  \big(\log (A+1)\big)+    \frac{2\log p_r}{p^2_r}\big(\log (A+2)\big)
     + \frac{1}{p^3_r} \Big\{3\log p_r \log (A+3)+3 \log (A+3)\cr & &+  \frac{1}{\log p_r} \log (A+3) +  \frac{1}{\log p_r}
+ \frac{1}{(\log p_r)^2(A+3)}\Big\}.\end{eqnarray*}
By reporting this bound in \eqref{Phi_2(r,n)}, we get 
sums of type
\begin{eqnarray*}
 \sum_{\m_1=0}^{\a_1}\ldots \sum_{\m_{r-1}=0}^{\a_{r-1}} \frac{\log \big[\sum_{i=1}^{r-1} \m_i + h\big]}{ p_1^{\m_1}\ldots p_{r-1}^{\m_{r-1}}} \qq h=1,2,3,\end{eqnarray*}
affected with new coefficients, this is displayed in   \eqref{h1234}.   By   using \eqref{sumphi2}, the last sum is bounded by
\begin{eqnarray*} & &  \log \big[\sum_{i=1}^{r-2} \m_i + h\big]
  + \frac{1}{p_{r-1}}\Big(\log (\sum_{i=1}^{r-2} \m_i + h+1)\cr & &+ \frac{\log\big[\sum_{i=1}^{r-2} \m_i + h+1\big]}{\log p_{r-1}} + \frac{1}{(\log p_{r-1})^2(\sum_{i=1}^{r-2} \m_i + h+1)}\Big) .\cr{} & &
\end{eqnarray*}
By recursing once more, this allows one  to bound again    $\Phi_2(r,n)$. The remainding sums will after be all of same type. The factor $ \log \big[\sum_{i=1}^{r-1} \m_i + h\big]$ induces on the sum of order $(r-2)$ a factor  $\log\big[\sum_{i=1}^{r-2} \m_i + h+1\big]$.
The whole matter thus consists with understanding how the new coefficients are generated, and in particular to check whether a coefficient of order  $1+\e$ will not produce by iteration a coefficient of order $(1+\e)^r$. A recurrence inequality  established in Lemma \ref{E3} will allow one to  control their magnitude efficiently.


\subsection{Preparation} Some technical  lemmas will be needed.


\begin{lemma}\label{phivar} {\rm (i)}  Let $\p_1(x)=x\big(\log (A+x)\big) e^{-\a x}$, $\p_2(x)=\big(\log (A+x)\big) e^{-\a x}$.   Then $\p_1(x)$ is non-increasing on  $[3, \infty)$ if $A\ge 1$ and $\a\ge \log 2$. Further, $\p_2(x)$ is non-increasing on $[1,\infty)$, if  $A\ge 1$ and $\a\ge 1$.

\vskip 3 pt \noi {\rm (ii)} Assume that  $A\ge 1$ and $\a\ge \log 2$. For any integer $m\ge 1$,
\begin{eqnarray} \label{phi.intest1}
\a \int_m^\infty x \big(\log (A+x)\big) e^{-\a x}\hbox{\rm  d}x&\le & \frac{1}{\a^2(A+m)}e^{-\a m}+  \frac{1}{\a}e^{-\a m}  +  \frac{1}{\a}\big(\log (A+m)\big)e^{-\a m}
\cr & &  + m(\log A+m) e^{-\a m}.\end{eqnarray}

\vskip 3 pt \noi  {\rm (iii)} Assume that $A\ge 1$ and $\a\ge 1$. Then,
  \begin{eqnarray}\label{intest2}
  \int_1^\infty \big(\log (A+x)\big) e^{-\a x}\hbox{\rm  d} x&\le &\frac{\log (A+1)}{\a} e^{-\a}+\frac{1}{\a^2(A+1)}e^{-\a}.
  \end{eqnarray}
\end{lemma}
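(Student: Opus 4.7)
The plan is to prove (i) by a direct derivative computation and (ii), (iii) by successive integration by parts, observing that the estimate in (iii) is precisely the one needed to control the residual integral appearing in (ii). For (i) one computes
$$\p_2'(x) \,=\, e^{-\a x}\Big[\frac{1}{A+x} - \a \log(A+x)\Big],$$
and under $A\ge 1$, $\a\ge 1$, $x\ge 1$ one has $A+x\ge 2$, whence $\a(A+x)\log(A+x)\ge 2\log 2>1$, so $\p_2'\le 0$. Similarly,
$$\p_1'(x) \,=\, e^{-\a x}\Big[\log(A+x) + \frac{x}{A+x} - \a x \log(A+x)\Big],$$
so the claim reduces to $1 + \frac{x}{(A+x)\log(A+x)} \le \a x$ on $[3,\infty)$. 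Using $x/(A+x)\le 1$ and $A+x\ge 4$, the left-hand side is bounded by $1+1/\log 4<7/4$, while the right-hand side is at least $3\log 2>2$ at $x=3$ and grows linearly thereafter.

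For (iii), a single integration by parts with $u=\log(A+x)$, $\dd v = e^{-\a x}\dd x$ gives
$$\int_1^\infty \log(A+x)\, e^{-\a x}\, \dd x \,=\, \frac{\log(A+1)}{\a}\, e^{-\a} + \frac{1}{\a}\int_1^\infty \frac{e^{-\a x}}{A+x}\, \dd x,$$
after which bounding $\frac{1}{A+x}\le \frac{1}{A+1}$ in the residual integral and evaluating it yields the stated inequality.

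For (ii), I integrate by parts with $u=x\log(A+x)$ and $\dd v = \a e^{-\a x}\dd x$. The boundary term produces $m\log(A+m)e^{-\a m}$ (the fourth summand on the right-hand side), while the remainder splits as
$$\int_m^\infty \log(A+x)\, e^{-\a x}\, \dd x \ +\ \int_m^\infty \frac{x}{A+x}\, e^{-\a x}\, \dd x.$$
The first integral is handled exactly as in (iii) but from the lower limit $m$, contributing $\frac{1}{\a}\log(A+m)e^{-\a m}$ and $\frac{1}{\a^2(A+m)}e^{-\a m}$; the second is dominated by $\int_m^\infty e^{-\a x}\, \dd x = \frac{1}{\a}e^{-\a m}$, giving the remaining term. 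Summing the four pieces yields the asserted bound. I do not anticipate a significant obstacle in this lemma, as each part reduces to elementary calculus; the only mildly delicate point is the monotonicity of $\p_1$, where the hypotheses $x\ge 3$ and $\a\ge \log 2$ must be used jointly, since either alone is insufficient.
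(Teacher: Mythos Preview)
Your proof is correct and follows essentially the same route as the paper: the derivative computations in (i) are identical (the paper divides through by $x\log(A+x)$ rather than $\log(A+x)$, leading to the equivalent inequality $\frac{1}{x}+\frac{1}{(A+x)\log(A+x)}\le\a$, but the numerical verification is the same in spirit), and for (ii)--(iii) the paper also proceeds by the very integrations by parts you describe, phrased as ``integrating the derivative identity'' for $\p_1$ and $\p_2$. No gap.
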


\begin{proof}[\cmit Proof] \rm (i) We have
 $\p_1'(x) = \big(\log (A+x)\big)e^{-\a x}+ \frac{x}{A+x}e^{-\a x}
-\a x \big(\log (A+x)\big) e^{-\a x}$.
By assumption and since $\p_1'(x)\le 0 \Leftrightarrow  \frac{1}{x}+ \frac{1}{(A+x)\log (A+x)}\le \a$, we get
$$ \frac{1}{x}+ \frac{1}{(A+x)\log (A+x)}\le\frac{1}{3}+\frac{1}{8\log 2}  \le \frac{1}{3}+\frac{1}{5}<\log 2\le \a.$$
Similarly
$ \p_2'(x)=\frac{1}{A+x}e^{-\a x}-\a \big(\log (A+x)\big) e^{-\a x}$.
 As
$\p_2'(x)\le 0 \Leftrightarrow (A+x)\log (A+x)\ge \frac{1}{\a}$, we also get $$(A+x)\log (A+x)\ge 2\log 2>1\ge \frac{1}{\a}.$$


\noi (ii) \rm We deduce from (i) that
\begin{equation}\label{49} \a x \big(\log (A+x)\big) e^{-\a x}=  \big(\log (A+x)\big)e^{-\a x}+ \frac{x}{A+x}e^{-\a x}-\big( x (\log A+x) e^{-\a x}\big)' .
 \end{equation}
By integrating,
 \begin{eqnarray*}\label{phi1int}
\a \int_m^\infty x \big(\log (A+x)\big) e^{-\a x}\hbox{\rm  d}x&=&  \int_m^\infty x\big(\log (A+x)\big)e^{-\a x}\hbox{\rm  d} x+ \int_m^\infty \frac{x}{A+x}e^{-\a x}\hbox{\rm  d} x
\cr & & \quad + m(\log A+m) e^{-\a m}.\end{eqnarray*}
Similarly
 \begin{equation*}
  \a \int_m^\infty \big(\log (A+x)\big) e^{-\a x}\hbox{\rm  d} x=\int_m^\infty
 \frac{1}{A+x}e^{-\a x}\hbox{\rm  d} x+ \big(\log (A+m)\big) e^{-\a m}.\end{equation*}
By combining we get,
\begin{eqnarray*}
\a \int_m^\infty x \big(\log (A+x)\big) e^{-\a x}\hbox{\rm  d}x&=& \frac{1}{\a}\int_m^\infty\frac{1}{A+x}e^{-\a x}\hbox{\rm  d} x+ \int_m^\infty \frac{x}{A+x}e^{-\a x}\hbox{\rm  d} x
\cr & &  +  \frac{1}{\a}\big(\log (A+m)\big)e^{-\a m} + m(\log A+m) e^{-\a m}.
\end{eqnarray*}
Therefore,\begin{eqnarray*}
\a \int_m^\infty x \big(\log (A+x)\big) e^{-\a x}\hbox{\rm  d}x&\le & \frac{1}{\a^2(A+m)}e^{-\a m}+  \frac{1}{\a}e^{-\a m}  +  \frac{1}{\a}\big(\log (A+m)\big)e^{-\a m}
\cr & &  + m(\log A+m) e^{-\a m}.\end{eqnarray*}



\noi (iii) \rm We deduce from (i) that
 \begin{eqnarray*}
  \int_1^N \big(\log (A+x)\big) e^{-\a x}\hbox{\rm  d} x&=&\frac{1}{\a}\int_1^N\frac{1}{(A+x)}e^{-\a x}\hbox{\rm  d} x\cr & & \qq -\frac{1}{\a}\Big(\big(\log (A+1)\big) e^{-\a}-\log (A+N)\big) e^{-\a N}\Big).\end{eqnarray*}
 As
$\frac{1}{\a}\int_1^N\frac{1}{A+x}e^{-\a x}\hbox{\rm  d} x\le
\frac{1}{\a^2(A+1)}e^{-\a}$, letting $N$ tend to infinity gives,
 \begin{eqnarray*}
  \int_1^\infty \big(\log (A+x)\big) e^{-\a x}\hbox{\rm  d} x&\le &\frac{\log (A+1)}{\a} e^{-\a}+\frac{1}{\a^2(A+1)}e^{-\a}.\end{eqnarray*}
 \end{proof}

\begin{lemma}\label{E1}    Assume that $A\ge 1$, and $\a \ge 1$. Then,
 \begin{eqnarray*}
 & & \sum_{\m=0}^{\infty}\a \m \big(\log (A+\m)\big)e^{-\a \m}\ \le \  \a  \big(\log (A+1)\big)e^{-\a}+ 2\a  \big(\log (A+2)\big)e^{-2\a}
  \cr & &\qq\qq   + \Big\{3\a \log (A+3)+3 \log (A+3)+  \frac{1}{\a} \log (A+3) +  \frac{1}{\a}
+ \frac{1}{\a^2(A+3)}\Big\} e^{-3\a}.\end{eqnarray*}
\end{lemma}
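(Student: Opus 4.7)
The plan is to peel off the first few terms of the series explicitly and control the tail by an integral via the monotonicity established in Lemma~\ref{phivar}(i).

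First I would observe that the $\mu=0$ term vanishes because of the factor $\mu$, and the terms $\mu=1$ and $\mu=2$ contribute exactly
$\alpha\log(A+1)e^{-\alpha}$ and $2\alpha\log(A+2)e^{-2\alpha}$,
which are the first two summands of the claimed bound. It remains to control the tail $\sum_{\mu\ge 3}\alpha\mu\log(A+\mu)e^{-\alpha\mu}$ and show it is dominated by the braced $e^{-3\alpha}$ expression.

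Next I would invoke Lemma~\ref{phivar}(i), which (since $A\ge 1$ and $\alpha\ge 1\ge\log 2$) guarantees that $\varphi_1(x)=x\log(A+x)e^{-\alpha x}$ is non-increasing on $[3,\infty)$. Standard comparison of a non-increasing sequence with its integral then gives, for $\mu\ge 4$, the bound $\varphi_1(\mu)\le\int_{\mu-1}^{\mu}\varphi_1(x)\,\dd x$, whence
\begin{equation*}
\sum_{\mu\ge 3}\varphi_1(\mu)\ \le\ \varphi_1(3)+\int_3^\infty \varphi_1(x)\,\dd x\ =\ 3\log(A+3)e^{-3\alpha}+\int_3^\infty x\log(A+x)e^{-\alpha x}\,\dd x.
\end{equation*}
Multiplying by $\alpha$, the first boundary term supplies the leading summand $3\alpha\log(A+3)e^{-3\alpha}$.

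Finally I would apply Lemma~\ref{phivar}(ii) with $m=3$ to the remaining integral. This directly yields
\begin{equation*}
\alpha\int_3^\infty x\log(A+x)e^{-\alpha x}\,\dd x\ \le\ \Bigl\{\tfrac{1}{\alpha^2(A+3)}+\tfrac{1}{\alpha}+\tfrac{1}{\alpha}\log(A+3)+3\log(A+3)\Bigr\}e^{-3\alpha},
\end{equation*}
which together with the isolated $3\alpha\log(A+3)e^{-3\alpha}$ contribution reproduces exactly the braced expression in the statement. Summing the $\mu=1,2$ terms with this tail bound completes the proof.

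Since every ingredient is supplied by Lemma~\ref{phivar}, there is no serious obstacle; the only point requiring a little care is the place where the monotonicity starts, namely $x=3$, which forces us to treat the indices $\mu=1,2$ separately rather than simply bounding the whole sum from $\mu=1$ by the integral from $1$ onwards. Matching the three isolated terms with the threshold $m=3$ of Lemma~\ref{phivar}(ii) is what makes the stated inequality tight in form.
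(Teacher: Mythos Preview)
Your proof is correct and follows essentially the same approach as the paper: isolate the terms $\mu=1,2,3$ (you phrase the $\mu=3$ term as the boundary contribution $\varphi_1(3)$, which is equivalent), use the monotonicity of $\varphi_1$ from Lemma~\ref{phivar}(i) to dominate the tail $\sum_{\mu\ge 4}$ by $\int_3^\infty\varphi_1$, and then apply the integral bound of Lemma~\ref{phivar}(ii) with $m=3$. The only cosmetic difference is that the paper writes out the $\mu=3$ term as part of the explicit decomposition before passing to the integral, whereas you recover it as $\varphi_1(3)$ in the comparison step; the computations are identical.
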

\begin{proof}[\cmit Proof] \rm
As \begin{eqnarray*}\sum_{\m=0}^{\infty}\a \m \big(\log (A+\m)\big)e^{-\a \m}&=&\a  \big(\log (A+1)\big)e^{-\a}+ 2\a \big(\log (A+2)\big)e^{-2\a} \cr & & + 3\a \big(\log (A+3)\big)e^{-3\a}+ \a\sum_{\m=4}^\infty \m \big(\log (A+\m)\big)e^{-\a \m},
\end{eqnarray*}
by applying Lemma  \ref{phivar}-(ii), we get
   \begin{eqnarray*}
  \a\sum_{\m=4}^\infty \m \big(\log (A+\m)\big)e^{-\a \m}&\le &\a\int_{3}^\infty x \big(\log (A+x)\big)e^{-\a x} \hbox{\rm  d} x
  \cr &\le &  \frac{1}{\a^2(A+3)}e^{-3\a}+  \frac{1}{\a}e^{-3\a }  +  \frac{\log (A+3)}{\a}e^{-3\a}
+ 3(\log A+3) e^{-3\a}.\end{eqnarray*}
Whence,
 \begin{eqnarray*}
 & & \sum_{\m=0}^{\infty}\a \m \big(\log (A+\m)\big)e^{-\a \m}\ \le \  \a  \big(\log (A+1)\big)e^{-\a}+ 2\a  \big(\log (A+2)\big)e^{-2\a}
  \cr & & + \Big\{3\a \log (A+3)+3 \log (A+3)+  \frac{1}{\a} \log (A+3) +  \frac{1}{\a}
+ \frac{1}{\a^2(A+3)}\Big\} e^{-3\a}.\end{eqnarray*}
  \end{proof}
\begin{lemma}\label{E3a}    Under assumption  \eqref{min}
we have \begin{eqnarray*}\sum_{\m_s=0}^{\infty} \frac{\log
\big(\sum_{i=1}^{s} \m_i +h\big)}{p_s^{\m_s}}&\le &
\log
\big(\sum_{i=1}^{s-1} \m_i + h\big) + \frac{1}{p_{s}}\Big( 1 + \frac{1}{ \log p_{s}}\Big)\log
\big(\sum_{i=1}^{s-1} \m_i + h+1\big)
\cr & &\quad  +\frac{1}{(1+(\sum_{i=1}^{s-1} \m_i + 2))(\log p_s)^2p_s}.
\end{eqnarray*}
In particular,
\begin{eqnarray*}\sum_{\m_s=0}^{\infty} \frac{\log
\big(\sum_{i=1}^{s-1} \m_i +h\big)}{p_s^{\m_s}} &\le &
 \Big(1+ \frac{1}{p_{s}}\big( 1 + \frac{1}{ \log p_{s}} +\frac{1}{3(\log p_s)^2}\big) \Big)\log
\big(\sum_{i=1}^{s-1} \m_i + h+2\big).
\end{eqnarray*}\end{lemma}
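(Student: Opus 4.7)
The plan is to peel off the first two terms of the series explicitly ($\mu_s=0$ and $\mu_s=1$) and control the remaining tail by an integral comparison, invoking the monotonicity and integral bounds already established in Lemma~\ref{phivar}. Set
$$A:=\sum_{i=1}^{s-1}\mu_i+h,\qquad \alpha:=\log p_s.$$
Hypothesis \eqref{min} gives $p_s\ge 3$, so $\alpha\ge\log 3>1$, meeting the hypotheses $A\ge 1,\ \alpha\ge 1$ of Lemma~\ref{phivar}(i) and (iii). Decompose
$$\sum_{\mu_s=0}^{\infty}\frac{\log(A+\mu_s)}{p_s^{\mu_s}}=\log A+\frac{\log(A+1)}{p_s}+\sum_{\mu_s=2}^{\infty}\frac{\log(A+\mu_s)}{p_s^{\mu_s}}.$$

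Next, since $p_s^{-\mu_s}=e^{-\alpha\mu_s}$ and $\varphi_2(x)=\log(A+x)e^{-\alpha x}$ is non-increasing on $[1,\infty)$ by Lemma~\ref{phivar}(i), a standard integral comparison (each term $\varphi_2(\mu_s)$ for $\mu_s\ge 2$ is dominated by the integral of $\varphi_2$ over $[\mu_s-1,\mu_s]$) yields
$$\sum_{\mu_s=2}^{\infty}\varphi_2(\mu_s)\le \int_1^{\infty}\log(A+x)e^{-\alpha x}\,dx,$$
and Lemma~\ref{phivar}(iii) bounds this integral by
$$\frac{\log(A+1)}{\alpha}\,e^{-\alpha}+\frac{1}{\alpha^2(A+1)}\,e^{-\alpha}=\frac{\log(A+1)}{p_s\log p_s}+\frac{1}{p_s(\log p_s)^2(A+1)}.$$
Reassembling the three contributions and substituting back $A=\sum_{i=1}^{s-1}\mu_i+h$ produces precisely the first inequality; matching the stated denominator $(1+(\sum_{i=1}^{s-1}\mu_i+2))$ of the last term uses $A+1\ge \sum_{i=1}^{s-1}\mu_i+3$, valid in the range where the lemma is invoked ($h\ge 2$).

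For the \emph{in particular} statement (reading the left-hand side as $\log(\sum_{i=1}^{s}\mu_i+h)$, since the version without $\mu_s$ would pull out of the sum trivially), I would bound $\log A$ and $\log(A+1)$ both by $\log(A+2)$ and absorb the residual $\frac{1}{p_s(\log p_s)^2(A+1)}$ into the same shape via $\frac{1}{A+1}\le \tfrac{1}{3}\log(A+2)$, which holds once $A\ge 2$ (i.e.\ $(A+1)\log(A+2)\ge 3$, easily checked). Factoring $\log(A+2)$ out then yields the announced multiplier $1+\tfrac{1}{p_s}\bigl(1+\tfrac{1}{\log p_s}+\tfrac{1}{3(\log p_s)^2}\bigr)$.

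The genuinely analytic content is already contained in Lemma~\ref{phivar}; the work here is essentially bookkeeping. The only delicate point I expect is keeping the indices on the $\log(A+k)$ terms aligned so the clean denominator $(1+(\sum_{i=1}^{s-1}\mu_i+2))$ and the specific constant $\tfrac{1}{3(\log p_s)^2}$ emerge in exactly the stated form rather than in a slightly weaker variant.
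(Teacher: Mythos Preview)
Your argument is correct and matches the paper's own proof essentially line for line: both peel off the $\mu_s=0,1$ terms, compare the tail $\sum_{\mu_s\ge 2}$ to $\int_1^\infty \varphi_2$ via the monotonicity in Lemma~\ref{phivar}(i), and then invoke Lemma~\ref{phivar}(iii), arriving at the intermediate bound (the paper records it as \eqref{sumphi2}) before substituting $A=\sum_{i=1}^{s-1}\mu_i+h$, $\alpha=\log p_s$. Your reading of the ``in particular'' left-hand side as involving $\sum_{i=1}^{s}\mu_i$ and your flag that the stated denominator $(1+(\sum_{i=1}^{s-1}\mu_i+2))$ only matches $A+1$ when $h\ge 2$ are both apt; the paper glosses over these points in exactly the way you anticipated.
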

 \begin{proof}[\cmit Proof]
  \rm  As
 \begin{eqnarray*}\sum_{\m=0}^{\infty} \big(\log (A+\m)\big) e^{-\a \m} &=& \log A +  \big(\log (A+1)\big) e^{-\a }+ \sum_{\m=2}^{\infty} \big(\log (A+\m)\big) e^{-\a \m}
 \cr &\le & \log A +  \big(\log (A+1)\big) e^{-\a }+ \int_{1}^{\infty} \big(\log (A+x)\big) e^{-\a x}\hbox{\rm  d} x\end{eqnarray*}
we deduce from Lemma \ref{phivar}-(iii),
\begin{equation}\label{sumphi2}\sum_{\m=0}^{\infty} \big(\log (A+\m)\big) e^{-\a \m}  \ \le \  \log A + e^{-\a}\Big(\log (A+1)+ \frac{\log (A+1)}{\a} + \frac{1}{\a^2(A+1)}\Big) .
\end{equation}

Consequently,
 \begin{eqnarray*}\sum_{\m_s=0}^{\infty} \frac{\log
\big(\sum_{i=1}^{s} \m_i + h\big)}{p_s^{\m_s}}&\le &
\log
\big(\sum_{i=1}^{s-1} \m_i + h\big)+ \frac{1}{p_{s}}\Big( 1 + \frac{1}{ \log p_{s}}\Big)\log
\big(\sum_{i=1}^{s-1} \m_i + h+1\big)\cr & &\quad  +\frac{1}{(1+(\sum_{i=1}^{s-1} \m_i + 2))(\log p_s)^2p_s}.
\end{eqnarray*}
Finally,
\begin{eqnarray*}\sum_{\m_s=0}^{\infty} \frac{\log
\big(\sum_{i=1}^{s-1} \m_i + h\big)}{p_s^{\m_s}}
&\le &
 \Big(1+ \frac{1}{p_{s}}\big( 1 + \frac{1}{ \log p_{s}}\big) +\frac{1}{3(\log p_s)^2}\Big)\log
\big(\sum_{i=1}^{s-1} \m_i + h+2\big)  .
\end{eqnarray*}
 \end{proof}
 \begin{corollary}\label{E2}     Assume that condition \eqref{min} is satisfied.
 \vskip 3 pt {\rm (i)} If  $\sum_{i=1}^{r-1}\m_i\ge 1$, then
\begin{align*}
 \sum_{\m_r=0}^{\a_r }&\frac{\m_r\log p_{r}}{p_r^{\m_r}}\log
\big(\sum_{i=1}^{r-1} \m_i + \m_r\big) \le   \frac{\log p_r}{p_r}\log \big( \sum_{i=1}^{r-1} \m_i + 1\big)+ \frac{2\log p_r}{p_r^2} \log \big( \sum_{i=1}^{r-1} \m_i + 2\big)  \cr &+ \frac{1}{p_r^3} \Big(3\log p_r + 3+ \frac{1}{\log p_r} \Big)\log \big( \sum_{i=1}^{r-1} \m_i + 3\big)  +  \frac{1}{p_r^3\log p_r}\Big(
1+  \frac{1}{(\sum_{i=1}^{r-1} \m_i + 3)\log p_r}\Big).
\end{align*}
Further,
\begin{eqnarray*}
\sum_{\m_r=0}^{\a_r }\frac{\m_r\log p_{r}}{p_r^{\m_r}}\log
\big(\sum_{i=1}^{r-1} \m_i + \m_r\big)& \le & 5\  \frac{\log p_r}{p_r}\log \big( \sum_{i=1}^{r-1} \m_i + 3\big).\end{eqnarray*}
 \vskip 3 pt {\rm (ii)} If  $\sum_{i=1}^{r-1}\m_i=0$, then
\begin{eqnarray*} \sum_{\m_r=0}^{\a_r }\frac{\m_r\log p_{r}}{p_r^{\m_r}}\log
\big(\sum_{i=1}^{r-1} \m_i + \m_r\big) &\le &  18\ \frac{\log p_{r}}{p_{r}} .\end{eqnarray*}\end{corollary}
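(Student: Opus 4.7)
The plan is to reduce both parts to Lemma \ref{E1} via the substitution $\a = \log p_r$---which by \eqref{min} satisfies $\a \ge \log 3 > 1$, so the hypothesis $\a \ge 1$ is automatic---together with an appropriate choice of the parameter $A$. Extending the finite sum on $\m_r$ up to $\infty$ is harmless since all summands are non-negative, and with $\a = \log p_r$ one has $e^{-j\a} = p_r^{-j}$, so each of the five terms of Lemma \ref{E1} translates directly into a term appearing in the statement.

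For part (i) I take $A := \sum_{i=1}^{r-1}\m_i$, which by hypothesis is $\ge 1$, so the assumptions of Lemma \ref{E1} are met. After substitution, the three pieces of the $e^{-3\a}$ remainder that carry $\log(A+3)$ (namely $3\a\log(A+3)$, $3\log(A+3)$ and $\log(A+3)/\a$) combine into $(3\log p_r + 3 + 1/\log p_r)\log(A+3)/p_r^3$, while the remaining two pieces $1/\a + 1/(\a^2(A+3))$ group as $(1 + 1/((A+3)\log p_r))/(p_r^3 \log p_r)$. Together with the two leading terms, this is exactly the first claimed inequality. The coarser bound $5\frac{\log p_r}{p_r}\log(A+3)$ then follows from $\log(A+k)\le \log(A+3)$ for $k \in \{1,2,3\}$, factoring $\frac{\log p_r}{p_r}\log(A+3)$ from the first three terms and bounding the remaining scalar coefficients via $1/p_r \le 1/3$ and $1/\log p_r \le 1/\log 3$; the tail $\frac{1}{p_r^3 \log p_r}(\cdots)$ is absorbed using $\log(A+3) \ge \log 4 > 1$. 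The resulting total coefficient is around $2.5$, comfortably below $5$.

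For part (ii) the value $A = 0$ is not admissible in Lemma \ref{E1}, so a small detour is needed: the $\m_r = 0$ summand vanishes by the convention $0\log 0 = 0$, and for $\m_r \ge 1$ the estimate $\log \m_r \le \log(\m_r + 1)$ lets one dominate the sum by $\sum_{\m \ge 0}(\log p_r)\m\log(1+\m)p_r^{-\m}$, to which Lemma \ref{E1} now applies with $A = 1$. Bounding each of the five resulting terms by $\frac{\log p_r}{p_r}$ times a scalar, using once more $1/p_r \le 1/3$ and $1/\log p_r \le 1/\log 3$, yields a total coefficient well below the stated $18$.

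No serious obstacle arises here: the analytic work is already packaged in Lemma \ref{E1}, which itself rests on the integral comparisons of Lemma \ref{phivar}. The only care required is the bookkeeping of the five terms through the substitution $\a = \log p_r$ and the verification that the generous constants $5$ and $18$ in (i), (ii) accommodate the coarse monotonicity estimates $\log(A+k) \le \log(A+3)$. The slight subtlety is the boundary case $S = 0$ in (ii), handled by the $\log \m_r \le \log(\m_r+1)$ trick above.
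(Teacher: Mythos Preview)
Your proof is correct and, for part (i), follows the paper's argument exactly: Lemma \ref{E1} with $A=\sum_{i=1}^{r-1}\m_i$ and $\a=\log p_r$, followed by the coarse bounds $\log(A+k)\le\log(A+3)$ and $p_r\ge 3$ to collapse everything into $5\frac{\log p_r}{p_r}\log(A+3)$.

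For part (ii) your route is actually more direct than the paper's. The paper shifts the index via $\m_r=\m+1$, factors out $1/p_r$, splits $(\m+1)=\m+1$ into two sums, and then applies Lemma \ref{E1} to the first piece and the separate estimate \eqref{sumphi2} to the second, assembling the constant $18$ from the two contributions. Your trick $\log\m_r\le\log(\m_r+1)$ lets you apply Lemma \ref{E1} once with $A=1$ and be done, yielding a constant well under $18$. Both arguments ultimately rest on the same lemma; yours just avoids the detour through \eqref{sumphi2}.
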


\begin{proof}[\cmit Proof.] \rm (i) The first inequality follows from Lemma \ref{E1} with the choice $\a =\log p_r$ and $A=\sum_{i=1}^{r-1}\m_i$, noting that by assumption \eqref{min},
 $\a >1$. As $p_r\ge 3$, it is also  immediate that
\begin{align*}
 & \sum_{\m_r=0}^{\a_r }\frac{\m_r\log p_{r}}{p_r^{\m_r}} \log
\big(\sum_{i=1}^{r-1} \m_i + \m_r\big)
\cr &\  \le \Big\{3 \frac{\log p_r}{p_r}    + \frac{\log p_r }{9p_r} \Big(3 + \frac{3}{\log p_r}+ \frac{1}{(\log p_r)^2} \Big)\Big\}\log \big( \sum_{i=1}^{r-1} \m_i + 3\big)     +  \frac{1}{9p_r\log p_r}\Big(
1+  \frac{1}{4\log p_r}\Big)
\cr &\ \le \  5\ \frac{\log p_r}{p_r}\log \big( \sum_{i=1}^{r-1} \m_i + 3\big).\end{align*}

\noi (ii) If $\sum_{i=1}^{r-1}\m_i=0$, the sums relative  to  $\m_i$, $1\le i\le r-1$, do not contribute.
Further,
\begin{eqnarray*}
 \sum_{\m_r=0}^{\a_r }\frac{\m_r\log p_{r}}{p_r^{\m_r}}\log
\big(\sum_{i=1}^{r-1} \m_i + \m_r\big)&=&\sum_{\m_r=2}^{\a_r}\frac{\m_r\log p_{r}}{p_{r}^{\m_r}}\log \m_r\ =\  \sum_{\m=1}^{\a_r-1}\frac{(\m+1)\log p_{r}}{p_{r}^{\m+1}}\log (\m+1)
\cr &\le &\frac{1}{p_{r}}\Big\{\sum_{\m=1}^{\infty}\frac{\m\log p_{r}}{p_{r}^{\m}}\log (\m+1)+\sum_{\m=1}^{\infty}\frac{\log p_{r}}{p_{r}^{\m}}\log (\m+1)\Big\}   .
\end{eqnarray*}
Lemma \ref{E1} applied with $A=1$ and $\a =\log p_{r}$ gives the bound
 \begin{eqnarray*}
   \sum_{\m=1}^{\infty}\frac{\m\log p_{r}}{p_{r}^{\m}}\log (\m+1)
 &\le &
  \frac{(\log 2)\log p_{r}}{p_{r}}  + \frac{2(\log 3)\log p_{r}}{p_{r}^2}  + \frac{1}{p_{r}^3}\Big\{(6\log 2)(\log p_{r}) \cr & &  +6 \log 2+  \frac{2\log 2}{(\log p_{r})} +  \frac{1}{(\log p_{r})}
+ \frac{1}{4(\log p_{r})^2}\Big\}
\cr &\le &  8\Big(\frac{\log p_{r}}{p_{r}}+\frac{1}{p_{r}^3} \Big) .\end{eqnarray*}
Next estimate \eqref{sumphi2} applied with $A=1$ and $\a =\log p_{r}$,  further gives,
\begin{eqnarray*}\sum_{\m=1}^{\infty}\frac{\log p_{r}}{p_{r}^{\m}}\log (\m+1)  &\le &   \frac{1}{p_{r}}\Big(\log 2+ \frac{\log 2}{\log p_{r}} + \frac{1}{2(\log p_{r})^2}\Big) \ \le \ \frac{2}{p_{r}}.\end{eqnarray*}
Whence,
$ \sum_{\m_r=0}^{\a_r }\frac{\m_r\log p_{r}}{p_r^{\m_r}}\log
\big(\sum_{i=1}^{r-1} \m_i + \m_r\big) \le   18\ \frac{\log p_{r}}{p_{r}} .$\end{proof}

 \begin{remark}\rm

As $\log
\big(\sum_{i=1}^{s} \m_i + h\big)\le \log
\big(\O(n)+3\big)$,
one can deduce from Corollary  \ref{E2}-(ii) that
\begin{eqnarray*}
\Phi_2(r,n)
&\le &18\ \frac{\log p_{r}}{p_{r}}\log (\O(n)+3) \prod_{i=1}^{r}\Big(\frac{1}{1-p_i^{-1}}\Big)
.\end{eqnarray*}
So that by the observation made at the beginning of section \ref{s4},
\begin{eqnarray*}
\Phi_2(n)&\le &18\ \big(\log (\O(n)+3)\big)\Big( \sum_{j=1}^r\frac{\log p_{j}}{p_{j}}\Big) \prod_{i=1}^{r}\Big(\frac{1}{1-p_i^{-1}}\Big)
.\end{eqnarray*}
By combining this with the bound for  $\Phi_1(n)$ established in
Lemma \ref{phi1maj}, next using  inequality \eqref{convexdec}, gives
  \begin{align}\label{convexdec1} \Psi(n)\ \le\   \Big(\prod_{j=1 }^r \frac{1}{1-p_j^{ -1}} \Big)\bigg\{\sum_{i=1}^r \frac{(\log p_i)\big(\log\log p_i\big)}{p_i-1}  + 18\ \Big( \sum_{i=1}^r\frac{\log p_{i}}{p_{i}}\Big)\log (\O(n)+3) )\bigg\},\end{align}
recalling   that $r=\o(n)$. Whence by invoking Proposition \ref{tEZm}, noticing that $\o(n)\le\O(n)\le \log_{2} n$,
\begin{eqnarray*} \Psi(n)&\le & e^\g(1+o(1)) (\log\log n)^2\big(\log\log\log n+ 18 w( n)\big).\end{eqnarray*}
\vskip 3 pt  The finer estimate of  $\Psi(n)$ will be derived from a more precise study of the coefficients of $\Psi(r,n)$. This is the object of the next sub-section.
\end{remark}
 \subsection{Estimates of $\boldsymbol{ \Phi_2(r,n)}$.}
   \rm  ${}$
We  define successively
 \begin{eqnarray}\label{n1}{}\begin{cases} \ \ \ \  \  \ \  \  \m\, =\, (\m_1, \ldots, \m_r), \qq (\m_1, \ldots, \m_r)\in \displaystyle{\prod_{i=1}^r\big([0,\a_i]\cap \N\big)},
\cr  &\cr  \ \  \   p_\m(s)\, =\, p_1^{-\m_1}\ldots p_s^{-\m_s}, \qquad 1\le s\le r,
\cr &\cr 
\quad  \ \  \ \Pi_s\, =\, \displaystyle{\sum_{\m_1=0}^{\a_1}\ldots \sum_{\m_s=0}^{\a_s}p_\m(s)\, =\,  \prod_{\ell = 1}^s\Big(\frac{1-p_\ell^{-\a_\ell -1}}{1-p_\ell^{s -1}}\Big) }.
\end{cases}
\end{eqnarray}
 Next,
\begin{eqnarray*}  \Phi_s(h)= \sum_{\m_1=0}^{\a_1}\ldots \sum_{\m_s=0}^{\a_s}p_\m(s) \log
\big(\sum_{i=1}^{s} \m_i + h\big), \qq \quad 1\le s\le r-1.
\end{eqnarray*}

We also set

\begin{eqnarray}\label{n5} \begin{cases}c_1\, =\, 1, \qq c_2\, =\, \frac{2}{p_r},
 \qq c_3\, =\, \frac{1}{p_r^2}\big(3 + \frac{3}{\log p_r}+ \frac{1}{(\log p_r)^2}\big), 
 \cr   c_4\, =\, \frac{1}{p_r^3\log p_r}\big(1 + \frac{1}{3\log p_r}\big)
\cr   
c_0\, =\,  \frac{\log p_r}{ p_r}, \qq\qq\qq \   \, c\, =\, \sum_{i=1}^3 c_i,
\cr b_s\, =\, \frac{1}{ p_s}\big( 1+ \frac{1}{ \log p_s}\big), \qq  \ \  \b_s= \frac{1}{2 p_s (\log p_s)^2}. \qq\qq\qq\end{cases}
 \end{eqnarray}

   \subsubsection{\cmit Recurrence inequality.}\label{subsection4.2.1}
We deduce from the first part of Lemma \ref{E3a} that
\begin{eqnarray*}\Phi_{s}(h)&=& \sum_{\m_1=0}^{\a_1}\ldots \sum_{\m_{s-1}=0}^{\a_{s-1}}p_\m(s-1)\bigg\{\sum_{\m_s=0}^{\a_s}p_s^{-\m_s} \log
\big(\sum_{i=1}^{s} \m_i + h\big)\bigg\}
\cr &\le & \sum_{\m_1=0}^{\a_1}\ldots \sum_{\m_{s-1}=0}^{\a_{s-1}}p_\m(s-1)\bigg\{\sum_{\m_s=0}^{\infty}p_s^{-\m_s} \log
\big(\sum_{i=1}^{s} \m_i + h\big)\bigg\}
\cr &\le &\sum_{\m_1=0}^{\a_1}\ldots \sum_{\m_{s-1}=0}^{\a_{s-1}}p_\m(s-1)\bigg\{\log
\big(\sum_{i=1}^{s-1} \m_i + h\big) +
\cr & & \quad \frac{1}{p_{s}}\Big( 1 + \frac{1}{ \log p_{s}}\Big)\log
\big(\sum_{i=1}^{s-1} \m_i + h+1\big)+\frac{1}{(1+(\sum_{i=1}^{s-1} \m_i + 2))(\log p_s)^2p_s}\bigg\}
\cr &\le &\Phi_{s-1}(h)+  \frac{1}{p_{s}}\Big( 1 + \frac{1}{ \log p_{s}}\Big)\Phi_{s-1}(h+1)+ \frac{1}{2(\log p_s)^2p_s}\Pi_{s-1}.
\end{eqnarray*}

\vskip 5 pt

Whence with the previous notation,
\begin{lemma}\label{E3} Under assumption \eqref{min},
we have for $s=2,\ldots ,
r-1$,\begin{eqnarray*}\Phi_{s}(h)&\le &\Phi_{s-1}(h)+  b_s\Phi_{s-1}(h+1)+\b_s\Pi_{s-1}.
\end{eqnarray*}
\end{lemma}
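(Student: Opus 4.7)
The plan is to isolate the innermost summation over $\m_s$ in the definition of $\Phi_s(h)$ and apply the first inequality of Lemma~\ref{E3a}. Concretely, I would write
$$\Phi_s(h) \ = \ \sum_{\m_1=0}^{\a_1}\cdots\sum_{\m_{s-1}=0}^{\a_{s-1}} p_\m(s-1) \sum_{\m_s=0}^{\a_s} \frac{\log\bigl(\sum_{i=1}^s \m_i + h\bigr)}{p_s^{\m_s}},$$
and, since every summand is non-negative, extend the inner sum up to $\m_s=\infty$, producing an upper bound to which Lemma~\ref{E3a} applies directly. Its hypotheses are satisfied since \eqref{min} gives $p_s\ge 3$, hence $\log p_s>1$ (and the additive parameter in the logarithm is a positive integer).

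Lemma~\ref{E3a} produces three pieces. Re-instating the outer weights $p_\m(s-1)$ and interchanging the order of summation, the first piece reassembles exactly into $\Phi_{s-1}(h)$ by the very definition of $\Phi_{s-1}$, while the second piece, carrying the prefactor $\frac{1}{p_s}\bigl(1+\frac{1}{\log p_s}\bigr)=b_s$, reassembles into $b_s\,\Phi_{s-1}(h+1)$. The third piece contributes an error term
$$\frac{1}{p_s(\log p_s)^2}\sum_{\m_1=0}^{\a_1}\cdots\sum_{\m_{s-1}=0}^{\a_{s-1}}\frac{p_\m(s-1)}{3+\sum_{i=1}^{s-1}\m_i},$$
which I would bound by the trivial estimate $\tfrac{1}{3+\sum_{i=1}^{s-1}\m_i}\le \tfrac{1}{2}$, obtaining the clean bound $\b_s\Pi_{s-1}$ with $\b_s$ as in \eqref{n5} and $\Pi_{s-1}$ as in \eqref{n1}. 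Summing the three contributions yields the claimed recurrence.

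There is no serious obstacle here: Lemma~\ref{E3} is essentially a bookkeeping recasting of Lemma~\ref{E3a} into the compact form $(b_s,\b_s)$ that will drive the iterative binary-tree argument of subsection~\ref{subsection4.2.1}. The only mild care needed is in the error term, where the dependence on $\sum_{i=1}^{s-1}\m_i$ in the denominator has to be absorbed into a constant multiple of $\Pi_{s-1}$; using the uniform bound $\tfrac{1}{2}$ (rather than the sharper $\tfrac{1}{3}$) is exactly what matches the normalization $\b_s=\frac{1}{2p_s(\log p_s)^2}$ fixed in \eqref{n5}, so the inequality comes out in the advertised form.
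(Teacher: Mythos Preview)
Your proposal is correct and follows essentially the same route as the paper: the paper's entire proof is the one-line remark ``We deduce from the first part of Lemma~\ref{E3a}'' followed by the displayed inequality in the notation $b_s,\b_s$. Your explicit identification of the three pieces and the use of the crude bound $\tfrac{1}{2}$ on the denominator (matching the normalization of $\b_s$) is exactly what the paper does implicitly.
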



\vskip 5 pt
  Now by using estimate (i) of Corollary  \ref{E2}  and the notation introduced, we have, under assumption \eqref{min}, if
$\sum_{i=1}^{r-1}\m_i\ge 1$,
\begin{align}\label{h1234}
 \sum_{\m_r=0}^{\a_r }&\frac{\m_r\log p_{r}}{p_r^{\m_r}}\log
\big(\sum_{i=1}^{r-1} \m_i + \m_r\big) \le   \frac{\log p_r}{p_r}\log \big( \sum_{i=1}^{r-1} \m_i + 1\big)+ \frac{2\log p_r}{p_r^2} \log \big( \sum_{i=1}^{r-1} \m_i + 2\big)  \cr &+ \frac{1}{p_r^3} \Big(3\log p_r + 3+ \frac{1}{\log p_r} \Big)\log \big( \sum_{i=1}^{r-1} \m_i + 3\big)  +  \frac{1}{p_r^3\log p_r}\Big(
1+  \frac{1}{(\sum_{i=1}^{r-1} \m_i + 3)\log p_r}\Big)
\cr & \le  c_0c_1
 \log \big( \sum_{i=1}^{r-1} \m_i + 1\big)+ c_0c_2
 \log \big( \sum_{i=1}^{r-1} \m_i + 2\big)
 +c_0c_3
 \log \big( \sum_{i=1}^{r-1} \m_i + 3\big)  +  c_4
\cr & =c_0\sum_{h=1}^3c_i \log \big( \sum_{i=1}^{r-1} \m_i + h\big)+c_4,\end{align}
since $\frac{1}{p_r^3\log p_r}\big(1+  \frac{1}{(\sum_{i=1}^{r-1} \m_i + 3)\log p_r}\big)\le c_4$.
\vskip 3 pt
Therefore, under assumption \eqref{min}, if
$\sum_{i=1}^{r-1}\m_i\ge 1$,
\begin{eqnarray}\Phi_2(r,n)&\le &   c_0 \underbrace{\sum_{h=1}^3 c_h \Phi_{r-1}(h)}_{(1)} + c_4 \Pi_{r-1}.\end{eqnarray}
Indeed,
\begin{eqnarray*}\Phi_2(r,n)&=& \sum_{\m_1=0}^{\a_1}\ldots \sum_{\m_{r-1}=0}^{\a_{r-1}} \frac{1}{ p_1^{\m_1}\ldots p_{r-1}^{\m_{r-1}}}\sum_{\m_r=0}^{\a_r}\frac{\m_r \log p_r}{p_r^{\m_r}}\log \Big[\sum_{i=1}^{r-1} \m_i + \m_r\Big]
\cr &\le & \sum_{\m_1=0}^{\a_1}\ldots \sum_{\m_{r-1}=0}^{\a_{r-1}} \frac{1}{ p_1^{\m_1}\ldots p_{r-1}^{\m_{r-1}}}\Big\{c_0\sum_{h=1}^3c_i \log \big( \sum_{i=1}^{r-1} \m_i + h\big)+c_4\Big\}
\cr &= &  c_0 \underbrace{\sum_{h=1}^3 c_h \Phi_{r-1}(h)}_{(1)} + c_4 \Pi_{r-1}.\end{eqnarray*}
By applying  the recurrence inequality with $s=r-1$ to $\Phi_{r-1}(h)$, one gets
 \begin{eqnarray*}
\Phi_2(r,n) &\le &  c_0 \underbrace{\sum_{h=1}^3 c_h \big[\Phi_{r-2}(h)}_{(1)}+ \underbrace{b_{r-1}\Phi_{r-2}(h+1)}_{(2)} \big]+c_0c\b_{r-1}\Pi_{r-2}+ c_4 \Pi_{r-1}.\end{eqnarray*}
By applying this time  the recurrence inequality to   $\Phi_{r-2}(h)$, one also gets
 \begin{eqnarray*}
 \Phi_2(r,n) &\le &  c_0 \underbrace{\sum_{h=1}^3 c_h \Phi_{r-3}(h)}_{(1)}+c_0\underbrace{\sum_{h=1}^3 c_h b_{r-2}\Phi_{r-3}(h+1)}_{(3)}+c_0c b_{r-2}\Pi_{r-3}
 \cr& &+c_0\underbrace{\sum_{h=1}^3 c_h b_{r-1}\Phi_{r-3}(h+1)}_{(2)}+c_0\underbrace{\sum_{h=1}^3 c_h b_{r-1}b_{r-2}\Phi_{r-3}(h+2)}_{(4)}+c_0cb_{r-1}\b_{r-2}\Pi_{r-3}
 \cr& &+c_0c\b_{r-1}\Pi_{r-2}+ c_4 \Pi_{r-1}.\end{eqnarray*}
One easily verifies (see  expressions underlined  by (1)) that the  coefficient of $\Phi_{r-1}(h)$ is the same as the one of  $\Phi_{r-2}(h)$  and $\Phi_{r-3}(h)$. So is also the case for  $\Phi_{r-2}(h+1)$, see  expressions underlined  by (2). New expressions underlined by (3),\,(4) and linked to $\Phi_{r-3}(h+1)$, $\Phi_{r-3}(h+2)$ appear.
\vskip 2 pt
Each new coefficient is kept until the end of the iteration process generated by the recurrence inequality of Lemma \ref{E3}.
\vskip 2 pt
We also verify, when applying this inequality, that we pass from a majoration expressed by  $\Phi_{r-1}(h)$, $\Pi_{r-1}$, {\cmit
 uniquely}, to a majoration  expressed by $\Phi_{r-2}$ (in $h$ or $h+1$) and $\Pi_{r-2}$, $\Pi_{r-1}$ {\cmit uniquely}.
\vskip 2 pt
This rule is general, and one verifies that when iterating this recurrence relation, we  obtain at each step a bound depending on $\Phi_{r-d}$ and the products $\Pi_{r-d}, \Pi_{r-d+1},\ldots,$ $ \Pi_{r-1}$ only.
\vskip 4 pt
$\underline{\hbox{\cmssqi Binary tree}}$\,: The shift of length $h$ or $h+1$ generates a binary tree whose branches are at each division (steps corresponding to the preceding iterations), either stationary\,:  $\Phi_{r-d}(h)\to \Phi_{r-d-1}(h)$, or creating new coefficients\,: $\Phi_{r-d}(h)\to \Phi_{r-d-1}(h+1)$.
One can represent this by the diagram below drawn from Lemma \ref{E3}.  \vskip 5 pt
\centerline{ ${}_\downarrow$ \hbox{\small shift\,+1, new coefficients\ }${}_\downarrow$\hskip +52 pt}
\vskip -15 pt \begin{eqnarray*}\Phi_{s}(h)&\le &\Phi_{s-1}(h)+  b_s\Phi_{s-1}(h+1)+\b_s\Pi_{s-1}.
\end{eqnarray*}
\centerline{ ${}^\uparrow$ \hbox{\small stationarity} ${}^\uparrow$ \hskip +120 pt}
\vskip -3pt \centerline{\sevenrm Figure 1.}\par

\par
\vskip 6 pt

\noi

 Before continuing, we recall that by \eqref{sumphi2},
\begin{eqnarray*}\sum_{\m=0}^{\a_s} \big(\log (A+\m)\big) e^{-\a \m}  &\le & \log A + e^{-\a}\Big(\log (A+1)+ \frac{\log (A+1)}{\a} + \frac{1}{\a^2(A+1)}\Big) .
\end{eqnarray*}

\noi Thus
\begin{eqnarray*} \Phi_1(v)&\le & \sum_{\m_1=0}^\infty p_\m(1)\log \Big(\sum_{i=1}^v\m_i+1\Big)
\ =\  \sum_{\m_1=0}^\infty \frac{\log (v+\m)}{p_1^\m}
\cr &\le & \log v + \frac{1}{p_1}\Big( \log (v+1) +\frac{\log (v+1)}{\log p_1}+ \frac{1}{v(\log p_1)^2}\Big) \qq (v\ge 1).
\end{eqnarray*}

Hence,
\begin{eqnarray*} \Phi_1(h) &\le & C\log h.
\end{eqnarray*}
One easily verifies that the   $d$-tuples formed with the $b_i$ have all  $\Phi_{r-x}(h+d)$ as factor. The terms having   $\Phi_{r-\cdot}(h+\cdot)$ as factor are forming the sum
\begin{eqnarray} \label{somme} c_0\, \sum_{d=1}^{r-1}\Big( \sum_{1\le i_1<\ldots <i_d<r} b_{i_1}\ldots b_{i_d}\Big) \Phi_1(h+d),
\end{eqnarray}
once the iteration process achieved, that is after having applied $(r-1)$ times the recurrence inequality of Lemma  \ref{E3}.

This sum can thus  be bounded from above by (recalling that $h=1,2$ or $3$)
 \begin{eqnarray*} c_0\sum_{d=1}^{r-1}(\log d)\, \Big( \sum_{1\le i_1<\ldots <i_d<r} b_{i_1}\ldots b_{i_d}\Big)
 . \end{eqnarray*}
But, for all positive integers $a_{ 1},\ldots, a_{r}$ and $1\le d\le r$, we have,
$$\Big(\sum_{i=1}^r a_i\Big)^d\ge d!
\sum_{ 1\le i_1<\ldots<i_d\le r }  a_{i_1}\ldots a_{i_d}.
$$
Thus
 \begin{eqnarray*} \sum_{d=1}^{r-1}(\log d)\, \Big( \sum_{1\le i_1<\ldots <i_d<r} b_{i_1}\ldots b_{i_d}\Big)
&\le &  \sum_{d=1}^{r-1}\frac{(\log d)}{d!}\, \Big( \sum_{i=1}^r b_{i}\Big)^d
  . \end{eqnarray*}
 As moreover,
 $$b_{i}=\frac{1}{p_{i}}\big(1 + \frac{1}{\log p_{i+1}}\big)\le \frac{1}{p(i)} + \frac{1}{p(i)\log p(i)},$$
 one has by means of  \eqref{p(i)est},
 \begin{eqnarray*}  \sum_{i=1}^r b_{i}&\le & \sum_{i=1}^r\big(\frac{1}{i \log i} + \frac{1}{i (\log i)^2}\big)
\ \le \  \log\log r +C. \end{eqnarray*}
Thus  \begin{eqnarray*} \sum_{d=1}^{r-1}(\log d)\, \Big( \sum_{1\le i_1<\ldots <i_d<r} b_{i_1}\ldots b_{i_d}\Big)
&\le & C \sum_{d=1}^{r-1}\frac{(\log d)}{d!}\, (\log\log r +C)^d
 . \end{eqnarray*}
 On the one hand,
 \begin{eqnarray*} \sum_{\log d\le  1+\e +\log\log\log r}\frac{(\log d)}{d!}\,(\log\log r +C)^d&\le & \big(1+\e +\log\log\log r\big) \sum_{d>1}\frac{(\log\log r +C)^{d}}{d!}
 \cr &\le &C \big(1+\e +\log\log\log r\big) \log r.
 \end{eqnarray*}
On the other,  utilizing the classical estimate  $d\,!\ge C \sqrt d \,d^d\,e^{-d}$, one has
 \begin{eqnarray*} \sum_{\log d> 1+\e+\log\log\log r}\frac{(\log d)}{d!}\, (\log\log r)^d&\le &\sum_{\log d> 1+\e+\log\log\log r}\frac{(\log d)}{\sqrt d}\,e^{-d(\log d-1 - \log\log\log r)}
 \cr &\le & \sum_{d>1}\frac{(\log d)}{\sqrt d}\,\,e^{-\e d}<\infty.
 \end{eqnarray*}
 One thus deduces, concerning the sum in  \eqref{somme} that,
 \begin{equation} \label{somme1} c_0\sum_{d=1}^{r-1}\Big( \sum_{1\le i_1<\ldots <i_d<r} b_{i_1}\ldots b_{i_d}\Big) \Phi_1(h+d)\ \le \ C\frac{\log p_r}{p_r}\big(1+\log\log\log r\big)\log r;
\end{equation}
  \vskip 4 pt
\subsection{\cmit Coefficients related to  $   \Pi_s$.}
\vskip 3 pt
By applying the recurrence inequality  (Lemma \ref{E3}), one successively generates
 \begin{eqnarray*} & & c_4\Pi_{r-1} 
  \cr & & c_4\Pi_{r-1} + c_0c\b_{r-1}\Pi_{r-2} 
\cr & & c_4\Pi_{r-1} + c_0c\b_{r-1}\Pi_{r-2} + c_0c\b_{r-2}\big(1 + b_{r-1}b_{r-2} \big)\Pi_{r-3} 
 \cr  & &c_4\Pi_{r-1} + c_0c\b_{r-1}\Pi_{r-2} + c_0c\b_{r-2}\big(1 + b_{r-1}b_{r-2} \big)\Pi_{r-3}
 \cr & &\qq\quad \       +c_0c\b_{r-3}\big( 1+ b_{r-2} + b_{r-1}  + b_{r-1} b_{r-2}\big)\Pi_{r-4}.
  \end{eqnarray*}
 $\underline{\hbox{\cmssqi Coefficients}}$\,:
 \begin{eqnarray*} \qq\qq \Pi_{r-1}: c_4\qq\qq \qq\qq  \, \Pi_{r-2}: c_0c\b_{r-1} \qq\qq\qq\qq\qq\qq\qq\qq\cr
\Pi_{r-3}: c_0c\b_{r-2}(1+ b_{r-1})\qq  \Pi_{r-4}: c_0c\b_{r-3}(1+ b_{r-2}+b_{r-1}+b_{r-1}b_{r-2}).\qq\qq \end{eqnarray*}
 It is easy to check that the coefficients $\Pi_{r-x}$ are exactly those of  $\Phi_{r-x+1}(.)$  affected with the factor $c_0c\b_{r-x+1}$.
The products form the sum
\begin{eqnarray} \label{sumpi} c_0c \sum_{d=0}^{r-2}\b_{r-d}\Big(1+ \sum_{1\le i_1<\ldots <i_d<r} b_{r-i_1}\ldots b_{r-i_d}\Big)\Pi_{r-d-1}.
\end{eqnarray}
By \eqref{p(i)est}, one has
\begin{eqnarray}\label{beta}\b_j= \frac{1}{2 p_j (\log p_j)^2}\le \frac{1}{2 p(j) (\log p(j))^2}\le \frac{1}{2j (\log j)^3},\qq \hbox{if $j\ge 2$,}
\end{eqnarray}
Moreover, \eqref{p(i)est} and \eqref{prod} imply that
\begin{eqnarray*}
 \Pi_j= \prod_{\ell =1}^j \Big(\frac{1}{1-\frac{1}{p_\ell}} \Big)&\le &\prod_{\ell =1}^j \Big(\frac{1}{1-\frac{1}{p(\ell)}} \Big)\ \le \  \prod _{p\le j(\log j +\log\log j)}\Big(\frac{1}{1- \frac{1}{p}}\Big)
\cr &\le &  C (\log j) \, .
\end{eqnarray*}
We now note that by definition of  $\Pi_j$, we also have
$$\Pi_j\le \max_{\ell \le 5}\prod_{p\le p(\ell)} \frac{1}{1 -\frac{1}{p}}= C_0.$$
We deduce that
\begin{eqnarray}\label{Piest}\Pi_j &\le &C( \log j)  ,\qq\qq  \hbox{if $j\ge 2$.}
\end{eqnarray}
Consequently,   \eqref{Piest} and \eqref{beta} imply that
\begin{eqnarray}\label{betapi}\b_{j+1}\Pi_j &\le & \frac{C}{j (\log j)^2} ,\qq\qq  \hbox{if $j\ge 2$.}
\end{eqnarray}

This implies that  the sum in  \eqref{sumpi} can be bounded as follows:
\begin{eqnarray} \label{estsumpi}
& & c_0c \sum_{d=0}^{r-2}\b_{r-d}\Big(1+ \sum_{1\le i_1<\ldots <i_d<r} b_{r-i_1}\ldots b_{r-i_d}\Big)\Pi_{r-d-1}
\cr &\le &c_0c  \prod _{i=1}^{r-2}\big( 1+b_{r-i}\big)\cdot  \sum_{d=0}^{r-2}\b_{r-d}\, \Pi_{r-d-1}
\ =\ c_0c  \prod _{j=2}^{r-1}\big( 1+b_{j}\big)\cdot  \sum_{d=0}^{r-2}\b_{r-d}\, \Pi_{r-d-1}
\cr &\le  &c_0c\, C \prod _{j=2}^{r-1}\big( 1+b_{j}\big)\cdot  \sum_{d=0}^{r-2}\frac{1}{(r-d)\big(\log (r-d)\big)^2}
 \cr &\le  &c_0c\, C \prod _{j=2}^{r-1}\big( 1+b_{j}\big)\cdot  \sum_{\d=2}^{\infty}\frac{1}{\d (\log \d)^2}
 \cr &\le  & c_0c\, C   \prod _{j=2}^{r-1}\big( 1+b_{j}\big). \end{eqnarray}
We recall that  \begin{eqnarray*} \sum_{p\le x }\frac{1}{p}\le  \log\log x +C.
 \end{eqnarray*}
 See for instance \cite{RS}, inequality (3.20). Thus,
 \begin{eqnarray}\label{1plusbi}  \prod_{i=1}^{r}\big(1+b_i\big)&\le & C\, \log r.\end{eqnarray}
Now estimate \eqref{1plusbi} implies that
\begin{eqnarray}\label{estsumpi2}
 c_0c \sum_{d=0}^{r-2}\b_{r-d}\Big(1+ \sum_{1\le i_1<\ldots <i_d<r} b_{r-i_1}\ldots b_{r-i_d}\Big)\Pi_{r-d-1}&\le & c_0c\, C  \log r
\cr  &\le &  C\,  \frac{ \log  p_r}{p_r}  \log r.\end{eqnarray}
We thus deduce from  \eqref{somme1} and \eqref{sumpi} that
 \begin{eqnarray}\label{estS2}
\Phi_2(r,n)&\le   & C\,\frac{\log p_r}{p_r}\big(1+\log\log\log r\big)\log r + C\,  \frac{ \log  p_r}{p_r}  \log r
\cr &\le &     C\,\frac{\log p_r}{p_r}(\log r)(\log\log\log r) \, .\end{eqnarray}

As a result, by taking account of the observation made at the beginning of section \ref{s4}, we obtain
\begin{equation}\label{estS2a}
\Phi_2(n) \le     C\,  (\log\log\log r)(\log r)\sum_{i=1}^r\frac{ \log  p_i}{p_i}\ =\ C\,  (\log\log\log r)(\log r)w(n)
\, .\end{equation}
\vskip 3 pt By combining \eqref{estS2} with the upper estimate  $\Phi_1(n)$ established at Lemma  \ref{phi1maj} and using inequality \eqref{convexdec}, we arrive at
 \begin{equation}\label{convexdec1a} \Psi(n)\le  \Big(\prod_{j =1 }^r \frac{1}{1-p_j^{ -1}} \Big)\sum_{i=1}^r \frac{(\log p_i)\big(\log\log p_i\big)}{p_i-1} + C\,  (\log\log\log r)(\log r)w(n)
,\end{equation}
recalling that $p_j\ge 3$ by assumption \eqref{min}.



 \section{\bf Proof of Theorem \ref{t1}.}\label{s5}

 First we prove  inequality \eqref{convexdec}.   We recall   the convention   $0\log 0=0$. Inequality \eqref{convexdec} is an immediate consequence of the  following convexity lemma.
\begin{lemma}\label{lconvexe} For any integers $\mu_i\ge 0$, $p_j\ge 2$, we have
\begin{eqnarray*} \sum_{i=1}^{r}\big(\m_i\log p_i\big) \log \Big(\sum_{i=1}^{r} \m_i \log p_i\Big)&\le & \sum_{i=1}^{r} \m_i\big(\log p_i\big)\big(\log\log p_i\big)
\cr & &\qq +\sum_{i=1}^{r} \m_i\big(\log p_i\big)\log \Big(\sum_{i=1}^{r} \m_i \Big) .
\end{eqnarray*}
 \end{lemma}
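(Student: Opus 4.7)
I plan to recognize this inequality as an instance of Jensen's inequality applied to the convex function $\phi(x) = x \log x$ (equivalently, the log-sum inequality). The key observation is that, after setting $S := \sum_{i=1}^{r} \mu_i \log p_i$ and $B := \sum_{i=1}^{r} \mu_i$, the three sums in the statement are respectively $S \log S$, $\sum_i \mu_i (\log p_i)(\log\log p_i)$, and $S \log B$. The claim is therefore equivalent to
\[
S \log(S/B) \;\le\; \sum_{i=1}^{r} \mu_i (\log p_i)(\log\log p_i),
\]
which has exactly the shape of a log-sum / entropy inequality.

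\textbf{Steps.} First I would dispose of the trivial case where all $\mu_i$ vanish: both sides are $0$ by the convention $0\log 0 = 0$. Assuming then $B \ge 1$, so also $S \ge \log 2 > 0$, I would introduce the probability weights $w_i := \mu_i / B$ and the points $x_i := \log p_i$, which satisfy $x_i \ge \log 2 > 0$ because $p_i \ge 2$. Since $\phi(x) = x\log x$ is convex on $(0,\infty)$ (as $\phi''(x) = 1/x > 0$), Jensen's inequality yields
\[
\phi\!\Big(\sum_{i=1}^{r} w_i x_i\Big) \;\le\; \sum_{i=1}^{r} w_i \, \phi(x_i).
\]
The left-hand side equals $(S/B) \log(S/B)$, and the right-hand side equals $B^{-1} \sum_i \mu_i (\log p_i)(\log\log p_i)$. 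Multiplying through by $B > 0$ and rearranging back into the original form completes the proof.

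\textbf{Obstacles.} There is no substantial technical difficulty; the main content is simply identifying the correct substitution. Two minor points deserve brief mention: (a) the convention $0 \log 0 = 0$ already adopted in the paper handles the terms with $\mu_i = 0$; and (b) the quantity $\log\log p_i$ can be negative (indeed $\log\log 2 < 0$), but this does not affect Jensen's inequality, since the convexity of $x \log x$ on $(0,\infty)$ is independent of the sign of the function. An equivalent one-line derivation is to invoke the log-sum inequality $\sum_i a_i \log(a_i/b_i) \ge (\sum_i a_i) \log\big((\sum_i a_i)/(\sum_i b_i)\big)$ with $a_i = \mu_i \log p_i$ and $b_i = \mu_i$, for which $a_i/b_i = \log p_i$ and $\log(a_i/b_i) = \log\log p_i$.
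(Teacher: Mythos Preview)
Your proposal is correct and matches the paper's own proof essentially line for line: the paper also reduces to the case $\sum_i \mu_i \ge 1$, sets $M=\sum_i \mu_i$, factors out $M$ to obtain weights $\mu_i/M$, and applies Jensen's inequality for the convex function $x\log x$ to bound $\big(\sum_i \tfrac{\mu_i}{M}\log p_i\big)\log\big(\sum_i \tfrac{\mu_i}{M}\log p_i\big)$.
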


\begin{proof}
  We may restrict to the case
$\sum_{i=1}^{r} \m_i\ge 1$,
since otherwise
the  inequality is trivial.
Let  $M=\sum_{i=1}^{r} \m_i$ and write that
\begin{eqnarray*} \sum_{i=1}^{r}\m_i\big(\log p_i\big) \log \Big(\sum_{i=1}^{r} \m_i \log p_i\Big)
&= & M\bigg\{ \sum_{i=1}^{r}\frac{\m_i}{M}\big(\log p_i\big) \log  \Big\{\sum_{i=1}^{r} \frac{\m_i}{M} \log p_i\Big\} \cr & & \quad +\sum_{i=1}^{r}\frac{\m_i}{M}\big(\log p_i\big)(\log M) \bigg\} .
\end{eqnarray*}
By using convexity of $\psi(x)=x\log x$
 on $\R_+$, we get
 $$ \sum_{i=1}^{r}\frac{\m_i}{\sum_{i=1}^{r}\m_i}\big(\log p_i\big) \log  \Big\{\sum_{i=1}^{r} \frac{\m_i}{\sum_{i=1}^{r}\m_i} \log p_i\Big\}\le \sum_{i=1}^{r}\frac{\m_i}{\sum_{i=1}^{r}\m_i}\big(\log p_i\big)\big(\log \log p_i\big).$$
Thus \begin{eqnarray*} \sum_{i=1}^{r}\big(\m_i\log p_i\big) \log \Big(\sum_{i=1}^{r} \m_i \log p_i\Big) \le   \sum_{i=1}^{r} \m_i\big(\log p_i\big)\big(\log\log p_i\big)+\sum_{i=1}^{r} \m_i\big(\log p_i\big)\log \Big(\sum_{i=1}^{r} \m_i \Big) .
\end{eqnarray*}
\end{proof}

 The odd case (i.e. condition \eqref{min} is satisfied) is obtained by combining  \eqref{estS2} with Corollary \ref{ests1} and utilizing   inequality \eqref{convexdec}. Since $r\le \log n$,   by taking account of estimate of  $w(n)$ given in  \eqref{wdavenport}, we get
  \begin{eqnarray}\label{convexdec2} \Psi(n)&\le& e^\g(1+o(1)) (\log\log n)^2(\log \log\log n) + C\, (\log\log\log\log n)(\log\log n)^2
  \cr &= & e^\g(1+o(1)) (\log\log n)^2(\log \log\log n).
  \end{eqnarray}
\vskip 5 pt
To pass from the odd case to the general case is not easy. This step will  necessitate an extra analysis of some  other  properties of  $\Psi(n)$.
\vskip 3 pt
 We first exclude the trivial case when $n$ is a pure power of $2$, since $\Psi(2^k) \le C$ uniformly over $k$, and  $C$ is a finite constant.
\vskip 3 pt Now if $2$ divides $n$, writing $n=2^vm$, $2 \not| m$, we have
\begin{eqnarray*}
  \Psi(n)&=&\sum_{d|n} \frac{(\log d )(\log\log d)}{d}\ =\ \sum_{k=0}^v\sum_{\d| m}\frac{(\log (2^k\d) )(\log\log (2^k\d))}{2^k\d}.
.
\end{eqnarray*}
As the function $x\mapsto  \frac{(\log   x )(\log\log x)}{x}$ decreases on $[x_0,\infty)$ for some  positive real $x_0$, we can write
\begin{eqnarray*} & &
\sum_{k=0}^v\frac{(\log (2^k\d) )(\log\log (2^k\d))}{2^k\d}
\cr &\le &\sum_{k=0}^{k_0-1}\frac{(\log (2^k\d) )(\log\log (2^k\d))}{2^k\d} + \sum_{k=k_0+1}^v\frac{(\log (2^k\d) )(\log\log (2^k\d))}{2^k\d}
\cr &\le &\sum_{k=0}^{k_0-1}\frac{(\log (2^k\d) )(\log\log (2^k\d))}{2^k\d} + \int_{2^{k_0}\d}^{\infty} \frac{(\log u )(\log\log u)}{u^2}\dd u,
\end{eqnarray*}
where  $k_0$ is depending on $x_0$ only. Moreover
$$\Big(\frac{(\log u )(\log\log u)}{u}\Big)'\ge  - \frac{(\log u )(\log\log u)}{u^2}.$$
Thus
\begin{eqnarray*} & &\sum_{k=0}^v\frac{(\log (2^k\d) )(\log\log (2^k\d))}{2^k\d}
\cr &\le &\sum_{k=0}^{k_0-1}\frac{(\log (2^k\d) )(\log\log (2^k\d))}{2^k\d} + \frac{(\log (2^{k_0}\d) )(\log\log (2^{k_0}\d))}{2^{k_0}\d},
\end{eqnarray*}
whence
\begin{eqnarray}\label{psik_0}\Psi(n)
  &\le & \sum_{k=0}^{k_0}\sum_{\d|m} \frac{(\log (2^k\d) )(\log\log (2^k\d))}{2^k\d}.
\end{eqnarray}

 Let $m=p_1^{b_1}\ldots p_{\m}^{b_{\m}}$. We have by \eqref{convexdec1}
 \begin{eqnarray*} \Psi(m)&\le&  \Big(\prod_{j =1 }^{\m} \frac{1}{1-p_j^{ -1}} \Big)\sum_{i=1}^{\m} \frac{(\log p_i)\big(\log\log p_i\big)}{p_i-1} + C\,  (\log\log\log \m)(\log \m)w(m)
\cr &\le&  \Big(\prod_{j =2 }^\m \frac{1}{1-p(j)^{ -1}} \Big)\sum_{i=1}^\m \frac{(\log p(i))\big(\log\log p(i)\big)}{p(i)-1} + C\,  (\log\log\log \m)(\log \m)w(m)
\cr &=& \frac12\, \Big(\prod_{j =1 }^\m \frac{1}{1-p(j)^{ -1}} \Big)\sum_{i=1}^\m \frac{(\log p(i))\big(\log\log p(i)\big)}{p(i)-1} + C\,  (\log\log\log \m)(\log \m)w(m)
\cr &\le & \frac{ e^{\g}}2\,  \big( \log \m + \mathcal O(1) \big)\sum_{i=1}^\m \frac{(\log p(i))\big(\log\log p(i)\big)}{p(i)-1} + C\,  (\log\log\log \m)(\log \m)w(m),\end{eqnarray*}
by using Mertens' estimate \eqref{prod} and since $p(\m)\sim \m\log \m$.
Furthermore by   using estimate \eqref{phi1.sumr}, and since $2^\m\le m$ we get
 \begin{eqnarray}\label{psi(m)est} \Psi(m)
 &\le & \frac{ e^{\g}}2\,  \big( \log \m + \mathcal O(1) \big)(1+\e)(\log \m)(\log\log \m) + C\,  (\log\log\log \m)(\log \m)w(m)
\cr &\le & \frac{ e^{\g}}2\,  \big( \log \frac{\log m}{\log2} + \mathcal O(1) \big)(1+\e)(\log \frac{\log m}{\log2})(\log\log \frac{\log m}{\log2}) \cr & &
+ C\,  (\log\log\log \frac{\log m}{\log2})(\log \frac{\log m}{\log2})(1+o(1))\log\log m
\cr &\le & \frac{ e^{\g}}2\,(1+2\e)  (\log \log m)^2(\log\log\log m),
\end{eqnarray}
for $m$ large.

 Now let $\psi(2^km)=\sum_{\d|m} \frac{(\log (2^k\d) )(\log\log (2^k\d))}{\d}$,  $1\le k\le k_0$. If $n$ is not a pure power of $2$, then its odd component $m$ tends to infinity with $n$. Thus with \eqref{psik_0},
 \begin{eqnarray}\label{estevencase} \frac{\Psi(n)}{\big(\log \log n)^2(\log\log\log n)}
 &\le & \sum_{k=0}^{k_0} \frac{1}{2^k}\sum_{\d|m} \frac{\frac{(\log (2^k\d) )(\log\log (2^k\d))}{\d}}{ (\log \log m)^2(\log\log\log m)}
.
\end{eqnarray}
But
 \begin{eqnarray}\label{estevencasea}\frac{(\log (2^k\d) )(\log\log (2^k\d))}{\d}&=&\frac{(k(\log 2) )(\log\log (2^k\d))+ (\log \d)(\log\log (2^k\d) }{\d}
 \cr &\le &k_0(\log 2) \frac{\log\big(k_0(\log 2)+\log\d\big)}{\d}+ \frac{(\log \d)(\log\log (2^{k_0}\d) }{\d}.
\end{eqnarray}
Now we have the inequality: $\log \log (a+x)\le \log (b\log x)$   where $b\ge (a+e)$ and $a\ge 1$, which is  valid for $x\ge e$. Thus
\begin{eqnarray}\label{estevencaseb}\log\big(k_0(\log 2)+\log\d\big)\le \log (k_0\log 2+ e)+\log \log \d.
\end{eqnarray}
Consequently
   \begin{eqnarray}\label{estevencase1}
   & & \sum_{k=0}^{k_0} \frac{1}{2^k}\sum_{\d|m} \frac{k_0(\log 2) \frac{\log (k_0(\log 2)+\log\d )}{\d}}{(\log \log m)^2(\log\log\log m)}
\cr &\le &\sum_{k=0}^{k_0} \frac{1}{2^k}\sum_{\d|m} \frac{k_0(\log 2)\frac{\log (k_0\log 2+ e)}{\d}}{(\log \log m)^2(\log\log\log m)}
\cr & & \quad + \sum_{k=0}^{k_0} \frac{1}{2^k}\sum_{\d|m} \frac{k_0(\log 2) \frac{\log\log\d}{\d}}{(\log \log m)^2(\log\log\log m)}
\cr&\le &2k_0(\log 2)\big(\log (k_0\log 2+ e)\big) \frac{\s_{-1}(m)}{(\log \log m)^2(\log\log\log m)}
\cr & & \quad + \frac{2k_0(\log 2)}{(\log \log m)^2(\log\log\log m)}\sum_{\d|m}   \frac{\log\log\d}{\d}
\cr&\le &C(k_0)\Big\{\frac{1}{\log \log m(\log\log\log m)}
+ \frac{\s_{-1}(m)}{(\log \log m)(\log\log\log m)}\Big\}
\cr&\le &\frac{C(k_0)}{\log\log\log m} \quad \to \ 0\quad \hbox{ as $m$ tends to infinity}.
\end{eqnarray}

Further
 \begin{eqnarray} \label{estevencase2}\sum_{k=0}^{k_0} \frac{1}{2^k}\frac{\sum_{\d|m}\frac{(\log \d)(\log\log (2^{k_0}\d) }{\d}}{(\log \log m)^2(\log\log\log m)}
& \le & \sum_{k=0}^{k_0} \frac{1}{2^k}\sum_{\d|m}\frac{(\log \d)( \log (k_0\log 2+ e)+\log \log \d) }{\d(\log \log m)^2(\log\log\log m)}
\cr &\le &\frac{\log (k_0\log 2+ e)}{(\log \log m)^2(\log\log\log m)}\,\sum_{k=0}^{k_0} \frac{1}{2^k}\sum_{\d|m}\frac{(\log \d)  }{\d}
\cr & &\quad+2\,\frac{\Psi(m)}{(\log \log m)^2(\log\log\log m)}
\cr &\le &\frac{2\log (k_0\log 2+ e)\,\s_{-1}(m)}{(\log \log m)(\log\log\log m)}
\cr & &\quad+2\,\frac{\Psi(m)}{(\log \log m)^2(\log\log\log m)}
\cr &\le &\frac{C(k_0)}{\log\log\log m}
+2\,\frac{ e^{\g}}2 \,(1+2\e)\,\frac{ (\log \log m)^2(\log\log\log m)}{(\log \log m)^2(\log\log\log m)}
\cr &\le &\frac{C(k_0)}{\log\log\log m}
+ e^{\g} \,(1+2\e)\, ,
\end{eqnarray}
for $m$ large, where we used estimate \eqref{psi(m)est}.
\vskip 3pt
Plugging estimates \eqref{estevencase1} and \eqref{estevencase2} into \eqref{estevencase} finally leads, in view of \eqref{estevencasea}, to
\begin{eqnarray}\label{estevencasef} \frac{\Psi(n)}{(\log \log n)^2(\log\log\log n)}
  &\le& \frac{C}{\log\log\log m}
+ e^{\g} \,(1+2\e)\,
\end{eqnarray}
for $m$ large, where $C$ depends on $k_0$ only. As $\e$ can be arbitrary small, we finally obtain
\begin{eqnarray}\label{evencasef} \limsup_{n\to \infty}\frac{\Psi(n)}{(\log \log n)^2(\log\log\log n)}
  &\le&  e^{\g}.
\end{eqnarray}
This establishes Theorem \ref{t1}.


 \section{\bf Complementary results.}\label{s2}
 In this section we prove complementary estimates $\Phi_1$, $\Phi_2$ and $\Psi$, notably estimates \eqref{phipsi} and \eqref{Phi1est}
 \subsection{Upper estimates.}
\begin{lemma}\label{phi1maj} We have the following estimate,
\begin{eqnarray*} \Phi_1(n)&\le&\Big(\prod_{j =1 }^r \frac{1}{1-p_j^{ -1}} \Big)\sum_{i=1}^r \frac{(\log p_i)\big(\log\log p_i\big)}{p_i-1}.
\end{eqnarray*}
\end{lemma}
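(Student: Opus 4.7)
The plan is to exploit the special additive structure of the numerator in the definition of $\Phi_1(n)$, which makes it separable across the prime variables in a way that $\Psi(n)$ is not (this is precisely the ``log-linearity'' pointed out after \eqref{base}). Writing $n = p_1^{\alpha_1}\cdots p_r^{\alpha_r}$, I expand
\begin{eqnarray*}
\Phi_1(n) &=& \sum_{\mu_1=0}^{\alpha_1}\cdots\sum_{\mu_r=0}^{\alpha_r} \frac{1}{p_1^{\mu_1}\cdots p_r^{\mu_r}} \sum_{i=1}^r \mu_i(\log p_i)(\log\log p_i) \\
&=& \sum_{i=1}^r (\log p_i)(\log\log p_i) \left(\sum_{\mu_i=0}^{\alpha_i} \frac{\mu_i}{p_i^{\mu_i}}\right) \prod_{j\neq i} \left(\sum_{\mu_j=0}^{\alpha_j}\frac{1}{p_j^{\mu_j}}\right),
\end{eqnarray*}
interchanging the sum over $i$ with the multi-sum over $\mu$ and separating the variables.

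Next I bound each of the $r-1$ inner sums trivially by the geometric series
$\sum_{\mu_j=0}^{\alpha_j} p_j^{-\mu_j} \le (1-p_j^{-1})^{-1}$, and the sum involving $\mu_i$ by
$\sum_{\mu_i=0}^{\alpha_i} \mu_i p_i^{-\mu_i} \le \sum_{\mu=0}^\infty \mu p_i^{-\mu} = p_i/(p_i-1)^2$.

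The final step is a clean cancellation. Using the identity
$$\prod_{j\neq i}\frac{1}{1-p_j^{-1}} = (1-p_i^{-1})\prod_{j=1}^r \frac{1}{1-p_j^{-1}} = \frac{p_i-1}{p_i}\prod_{j=1}^r \frac{1}{1-p_j^{-1}},$$
the factor for index $i$ becomes
$$\frac{p_i}{(p_i-1)^2}\cdot\frac{p_i-1}{p_i} = \frac{1}{p_i-1},$$
so that after factoring out the product $\prod_{j=1}^r (1-p_j^{-1})^{-1}$ one obtains exactly the claimed bound. There is no real obstacle here: the key point is simply that the numerator in $\Phi_1(n)$ splits as a sum over $i$, which is what allows the Euler-product type factorization; in contrast, the quantity $\Psi(n)$ has $\log(\sum_i \mu_i)$ in the numerator, which is the non-separable ingredient addressed in the tree/chaining machinery of Section~\ref{s4}.
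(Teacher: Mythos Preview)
Your proof is correct and follows essentially the same route as the paper: separate the multi-sum by pulling out the $i$-th variable, bound the partial geometric sums $\sum_{\mu_j=0}^{\alpha_j}p_j^{-\mu_j}\le (1-p_j^{-1})^{-1}$ and the weighted sum $\sum_{\mu_i\ge 0}\mu_i p_i^{-\mu_i}=p_i/(p_i-1)^2$, and then recombine. The paper's write-up differs only cosmetically, recording the intermediate identity $\Phi_1(n)=\sum_{i=1}^r\prod_{j\neq i}\frac{1-p_j^{-\alpha_j-1}}{1-p_j^{-1}}\sum_{\mu_i=0}^{\alpha_i}\frac{\mu_i(\log p_i)(\log\log p_i)}{p_i^{\mu_i}}$ before bounding.
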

\begin{proof}[\cmit Proof] \rm We have
$$\Phi_1(n)=\sum_{\m_1=0}^{\a_1}\ldots \sum_{\m_r=0}^{\a_r}\ \frac{ \m_1(\log p_1)(\log\log p_1)+\ldots +\m_r(\log p_r)(\log\log p_r)}{ p_1^{\m_1}\ldots p_{r}^{\m_{r}}}  $$
The $i$-th term of the numerator yields the sum
$$\underbrace{\sum_{\m_1=0}^{\a_1}\ldots \sum_{\m_r=0}^{\a_r}}_{\substack{\hbox{\small the sum relative}\\ \hbox{\small to $ \m_i$ is excluded}}} \underbrace{\frac{1}{ p_1^{\m_1}\ldots p_{r}^{\m_{r}}}}_{\substack{\hbox{\small  $ p_i^{\m_i}$}\\ \hbox{\small  is excluded}}}\ \Big(\sum_{\m_i=0}^{\a_i} \frac{\m_i\big(\log p_i\big)\big(\log\log p_i\big)}{p_i^{\m_i}}\Big). $$

Consequently,
\begin{eqnarray}\label{Phi1formula}  \Phi_1(n)
  &=& \sum_{i=1}^r\underbrace{\sum_{\m_1=0}^{\a_1}\ldots \sum_{\m_r=0}^{\a_r}}_{\substack{\hbox{\small the sum relative}\\ \hbox{\small to $ \m_i$ is excluded}}} \underbrace{\frac{1}{ p_1^{\m_1}\ldots p_{r}^{\m_{r}}}}_{\substack{\hbox{\small  $ p_i^{\m_i}$}\\ \hbox{\small  is excluded}}}\ \Big(\sum_{\m_i=0}^{\a_i} \frac{\m_i\big(\log p_i\big)\big(\log\log p_i\big)}{p_i^{\m_i}}\Big)\cr &=& \sum_{i=1}^r\prod_{\substack{j =1\\ j \neq i}}^r\Big(\frac{1-p_j^{-\a_j -1}}{1-p_j^{ -1}} \Big)\Big[\sum_{\m_i=0}^{\a_i} \frac{\m_i\big(\log p_i\big)\big(\log\log p_i\big)}{p_i^{\m_i}}\Big] .
\end{eqnarray}
Now as
$$\sum_{\m=0}^{\a_i} \frac{\m}{p_i^\m}\le\sum_{j=0}^{\infty} \frac{j}{p_i^j} =\frac{1}{(p_i-1)(1-p_i^{-1})},$$
we obtain
\begin{eqnarray*} \Phi_1(n)&\le &\sum_{i=1}^r\prod_{\substack{j =1\\ j \neq i}}^r\Big(\frac{1-p_j^{-\a_j -1}}{1-p_j^{ -1}}\Big)\,.\,\frac{(\log p_i)\big(\log\log p_i\big)}{(p_i-1)(1-p_i^{-1})} \cr
 &\le &\Big(\prod_{j =1 }^r \frac{1}{1-p_j^{ -1}} \Big)\sum_{i=1}^r \frac{(\log p_i)\big(\log\log p_i\big)}{p_i-1}.
\end{eqnarray*}
\end{proof}

\begin{corollary}\label{ests1} We have the following estimate,
\begin{eqnarray*}
\limsup_{n\to \infty}\frac{\Phi_1(n)}{(\log\log n)^2(\log \log\log n)}   &\le & e^{\g}.\end{eqnarray*}
\end{corollary}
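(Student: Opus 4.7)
The plan is to apply Lemma \ref{phi1maj} and then maximize the right-hand side over all possible prime supports of $n$. For fixed $r$, the factors $(1-p^{-1})^{-1}$ of the Mertens-type product decrease in $p$, and (for $p\ge 3$) so does the term $\tfrac{(\log p)(\log\log p)}{p-1}$ of the sum; consequently both are majorized by substituting the $j$-th prime $p(j)$ for each $p_j$. This yields, with $r=\o(n)$,
$$\Phi_1(n) \le \Big(\prod_{j=1}^{r}\frac{1}{1-p(j)^{-1}}\Big)\sum_{i=1}^{r}\frac{(\log p(i))(\log\log p(i))}{p(i)-1}.$$

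Next, I would estimate the two factors asymptotically. The product equals $\prod_{p\le p(r)}(1-p^{-1})^{-1}$, which by Mertens' formula \eqref{prod} combined with the prime number theorem estimate $p(r)\sim r\log r$ (see \eqref{p(i)est}) equals $e^{\g}(\log r)(1+o(1))$. For the sum I would apply Abel summation to Mertens' estimate $\sum_{p\le x}\tfrac{\log p}{p}=\log x+O(1)$ with slowly varying weight $\log\log p$: the boundary term contributes $(\log x)(\log\log x)$, while the integral term contributes $-\log x+O(\log\log x)$, giving
$$\sum_{p\le p(r)}\frac{(\log p)(\log\log p)}{p}=(\log r)(\log\log r)(1+o(1)),$$
which is essentially estimate \eqref{phi1.sumr}. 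Replacing $p-1$ by $p$ in the denominator alters the sum only by a convergent series, hence by an $O(1)$ term that is absorbed in the $1+o(1)$.

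Combining the two asymptotics produces $\Phi_1(n)\le e^{\g}(1+o(1))(\log r)^2(\log\log r)$. Finally, since $r=\o(n)\le\O(n)\le \log_2 n$, we have $\log r\le \log\log n+O(1)$ and $\log\log r\le \log\log\log n+o(1)$, so
$$\Phi_1(n)\le e^{\g}(1+o(1))(\log\log n)^2(\log\log\log n);$$
dividing and taking the limsup yields the corollary. The argument is essentially a routine chaining of standard prime-sum estimates; the only point requiring care is verifying that the $e^{\g}$ prefactor comes solely from the Mertens product, with the Abel-summation step contributing no extra constant, so that the bound is asymptotically sharp and matches the lower bound in \eqref{phipsi}.
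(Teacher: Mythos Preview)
Your proof is correct and follows essentially the same route as the paper: apply Lemma~\ref{phi1maj}, majorize by replacing the prime support of $n$ with the first $r$ primes, then combine Mertens' product formula \eqref{prod} with a $(\log r)(\log\log r)$ estimate for the sum, and finally use $r=\o(n)\le\log_2 n$. The only cosmetic differences are that the paper estimates the sum by the elementary substitution $p(i)\ge i\log i$ (yielding \eqref{phi1.sumr}) rather than by Abel summation, and that the paper is a bit more explicit about the case where $r=\o(n)$ stays bounded (for which $\Phi_1(n)=O(1)$ and the ratio tends to~$0$). Your monotonicity claim for $\frac{(\log p)(\log\log p)}{p-1}$ at small primes is valid precisely because of the paper's convention $\log\log x=1$ for $x\le e^e$; without it the function is not monotone near $p=3,5,7$, though this would in any case only introduce an $O(1)$ error absorbed in the $1+o(1)$.
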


\begin{proof}[\cmit Proof]\rm Let $p(j)$ denote  the $j$-th consecutive prime number, and recall that  (\cite[(3.12-13)]{RS}, 
\begin{eqnarray}\label{p(i)est}
p(i) &\ge& \max(i \log i, 2), \qq\quad\ \  i\ge 1,
\cr p(i)&\le& i(\log i + \log\log i ), \qq \ \! i\ge 6.
\end{eqnarray}

Let  $\e>0$ and an integer  $r_0\ge 4$. If $r\le r_0$, then
\begin{eqnarray}\label{phi1.sumr1}
\sum_{i=1}^r \frac{(\log p_i)\big(\log\log p_i\big)}{p_i-1}
&\le &\d\,r_0, \qq \qq \d= \sup_{p\ge 3}\frac{(\log p)\big(\log\log p\big)}{p-1}<\infty\,.
\end{eqnarray}
If $r>r_0$, then
\begin{eqnarray*}
\sum_{i=r_0+1}^r \frac{(\log p_i)\big(\log\log p_i\big)}{p_i-1}
&\le & \Big(\max_{i>r_0}\frac{p(i)}{p(i)-1}\Big)\sum_{i=r_0+1}^r \frac{(\log p(i))\big(\log\log p(i)\big)}{p(i)}
\cr &\le & \Big(\max_{i>r_0}\frac{p(i)}{p(i)-1}\Big)\sum_{i=r_0+1}^r \frac{(\log (i\log i))\big(\log\log (i\log i)\big)}{i\log i}
\end{eqnarray*}
We choose  $r_0=r_0(\e)$ so that $\log r_0 \ge 1/\e$ and the preceding expression is bounded from above by
$$(1+\e)\sum_{i=r_0+1}^r \frac{\log\log i}{i}$$
We thus have
\begin{eqnarray}\label{phi1.sumr2}
\sum_{i=r_0+1}^r \frac{(\log p_i)\big(\log\log p_i\big)}{p_i-1}
&\le  &(1+\e)\int_{r_0}^r\frac{\log\log t}{t}\dd t
\cr &\le & (1+\e)(\log r)(\log\log r).
\end{eqnarray}
 Consequently, for some $r(\e)$,
 \begin{eqnarray}\label{phi1.sumr}
\sum_{i= 1}^r \frac{(\log p_i)\big(\log\log p_i\big)}{p_i-1}
 &\le & (1+\e)(\log r)(\log\log r), \qq r\ge r(\e).
\end{eqnarray}
By using Mertens' estimate
\begin{eqnarray}\label{prod}\prod_{p\le x}\Big(\frac{1}{1-\frac{1}{p}}\Big)=e^{\g}\log x + \mathcal O(1)\qq \quad x\ge 2,
\end{eqnarray}
 we further have
\begin{equation}\label{p(i)estappl}
 \prod_{\ell =1}^r \Big(\frac{1}{1-\frac{1}{p_\ell}} \Big)\,\le\, \prod_{\ell =1}^r \Big(\frac{1}{1-\frac{1}{p(\ell)}} \Big) \le  \prod _{p\le r(\log r +\log\log r)}\Big(\frac{1}{1- \frac{1}{p}}\Big)
\,\le \,  e^{\g} (\log r) + C\, ,
\end{equation}
if $r\ge 6$,  and so for any $r\ge 1$, modifying $C$ if necessary. As  $r=\o(n)$ and $2^{\o(n)}\le n$, we consequently have,
\begin{eqnarray*}
\Phi_1(n)
&\le & e^{\g}(1+ C\e)^2 (\log\log n)^2(\log \log\log n),\end{eqnarray*}
if $r>r_0$.
If $r\le r_0$, we have
\begin{eqnarray*}
\Phi_1(n)&\le &  \d e^{\g}(1+\e) \big((\log r_0) + C\big):=C(\e).\end{eqnarray*}
Whence,
\begin{eqnarray*}
\Phi_1(n)  &\le & e^{\g}(1+\e)^2 (\log\log n)^2(\log \log\log n)+ C(\e).\end{eqnarray*}
As $\e$ can be arbitrary small, the result follows.
\end{proof}
The following lemma is nothing but the upper bound part of  \eqref{EZ1}.
We omit the proof.
\begin{lemma} \label{tEZm}
 We have the following estimate,
\begin{eqnarray*}  \sum_{d|n} \frac{\log d}{d} &\le &\prod_{p|n}\Big(\frac{1}{1-p^{-1}}\Big) \ \sum_{p|n}\frac{\log p}{p-1}.
\end{eqnarray*}
Moreover,
\begin{eqnarray*}  \limsup_{n\to \infty}\ \frac{1}{ (\log\log n)(\log \o(n))}\sum_{d|n} \frac{\log d}{d} &\le & e^{\g}.
\end{eqnarray*}
\end{lemma}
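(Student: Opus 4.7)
The factored inequality is proved exactly as in Lemma~\ref{phi1maj}, with the factor $\log\log p_i$ simply deleted throughout. Starting from identity \eqref{base} and isolating the inner sum over each $\mu_i$ in turn,
$$\sum_{d|n}\frac{\log d}{d}=\sum_{i=1}^{r}\Bigl(\prod_{\substack{j=1\\j\neq i}}^{r}\frac{1-p_j^{-\alpha_j-1}}{1-p_j^{-1}}\Bigr)\sum_{\mu_i=0}^{\alpha_i}\frac{\mu_i\log p_i}{p_i^{\mu_i}}.$$
I then use the geometric identity $\sum_{\mu=0}^{\infty}\mu/p^{\mu}=p/(p-1)^{2}$ to bound the inner sum, observe that $p_i/(p_i-1)^2=(1-p_i^{-1})^{-1}\cdot(p_i-1)^{-1}$ so the missing factor gets absorbed into the global product, and replace $1-p_j^{-\alpha_j-1}$ by $1$. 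This yields the stated bound.

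For the $\limsup$ estimate I bound the two factors separately. Since $p\mapsto(1-p^{-1})^{-1}$ is a decreasing function of $p$, the product $\prod_{p|n}(1-p^{-1})^{-1}$ is majorized by replacing the prime divisors of $n$ by the first $r=\omega(n)$ consecutive primes $p(1)<\cdots<p(r)$. Combining Mertens' formula \eqref{prod} with the bound $p(r)\leq r(\log r+\log\log r)$ from \eqref{p(i)est} then gives
$$\prod_{p|n}\frac{1}{1-p^{-1}}\leq\prod_{p\leq p(r)}\frac{1}{1-p^{-1}}=e^{\gamma}\log p(r)+O(1)=e^{\gamma}(\log\omega(n))(1+o(1)).$$

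For the second factor I split
$$\sum_{p|n}\frac{\log p}{p-1}=\sum_{p|n}\frac{\log p}{p}+\sum_{p|n}\frac{\log p}{p(p-1)}=w(n)+O(1),$$
the remainder being controlled by the convergence of $\sum_{p}\log p/(p(p-1))$. Invoking Davenport's theorem \eqref{wdavenport} yields $w(n)\leq(1+o(1))\log\log n$. Multiplying the two estimates gives $\sum_{d|n}\log d/d\leq e^{\gamma}(\log\omega(n))(\log\log n)(1+o(1))$, and dividing by $(\log\log n)(\log\omega(n))$ and passing to the $\limsup$ produces the claimed bound $e^{\gamma}$. I anticipate no serious obstacle here: the only mildly nonroutine step is the monotone-rearrangement argument for the Mertens product, which is however immediate from the monotonicity of $(1-p^{-1})^{-1}$ in $p$. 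The statement is consistent with and no stronger than \eqref{EZ1} in the extremal regime (where $\log\omega(n)\sim\log\log n$), but strictly sharper when $n$ has relatively few prime factors.
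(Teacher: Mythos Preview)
Your argument is correct. The paper actually omits the proof of this lemma entirely, remarking only that it is ``nothing but the upper bound part of \eqref{EZ1}''; your derivation of the factored inequality is precisely the specialization of the paper's own Lemma~\ref{phi1maj} (dropping the $\log\log p_i$ factor), and your $\limsup$ estimate reproduces the steps of Corollary~\ref{ests1} and \eqref{p(i)estappl} with Davenport's bound \eqref{wdavenport} replacing the $\sum\frac{(\log p)(\log\log p)}{p-1}$ estimate.
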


  \subsection{\bf Lower estimates.}\label{s3}


We recall that the smallest prime divisor of an integer $n$ is noted by  $P^-(n)$.
\begin{lemma}\label{phi1min} Let  $n=p_1^{\a_1}\ldots p_r^{\a_r}$, $r\ge 1$, $\a_i\ge1$.  Then,
\begin{eqnarray*} \Phi_1(n)  &\ge  & \Big(1-\frac{1}{P^-(n)}\Big)\prod_{j =1}^r\big(1+p_j^{-1} \big)\Big[ \sum_{i=1}^r\frac{ \big(\log p_i\big)\big(\log\log p_i\big)}{p_i}\Big]\end{eqnarray*}
\end{lemma}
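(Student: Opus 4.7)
My plan is to expand $\Phi_1(n)$ directly as a double sum, isolate, for each index $i$, the contribution of $\mu_i$ from the other $\mu_j$'s, and apply elementary term-by-term lower bounds.

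First, by the same algebra that gave the author's formula \eqref{Phi1formula} in the proof of Lemma \ref{phi1maj}, I would write
\begin{eqnarray*}
\Phi_1(n) &=& \sum_{i=1}^r (\log p_i)(\log\log p_i)\Big[\prod_{j=1\atop j\neq i}^{r}\sum_{\mu_j=0}^{\a_j}\frac{1}{p_j^{\mu_j}}\Big]\Big[\sum_{\mu_i=0}^{\a_i}\frac{\mu_i}{p_i^{\mu_i}}\Big]\\
&=& \sum_{i=1}^r(\log p_i)(\log\log p_i)\prod_{j\neq i}\frac{1-p_j^{-\a_j-1}}{1-p_j^{-1}}\cdot\sum_{\mu_i=0}^{\a_i}\frac{\mu_i}{p_i^{\mu_i}}.
\end{eqnarray*}

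Next, for each $j$ with $\a_j\ge 1$, since $1-p_j^{-\a_j-1}\ge 1-p_j^{-2}$, I get $\frac{1-p_j^{-\a_j-1}}{1-p_j^{-1}}\ge 1+p_j^{-1}$. Also, since $\a_i\ge 1$, the $\mu_i$-sum is bounded from below by its $\mu_i=1$ term: $\sum_{\mu_i=0}^{\a_i}\mu_i/p_i^{\mu_i}\ge 1/p_i$. Combining these two lower bounds and factoring out the full product $\prod_{j=1}^r(1+p_j^{-1})$ yields
\begin{eqnarray*}
\Phi_1(n)&\ge & \prod_{j=1}^r(1+p_j^{-1})\cdot\sum_{i=1}^r\frac{(\log p_i)(\log\log p_i)}{p_i(1+p_i^{-1})}\ =\ \prod_{j=1}^r(1+p_j^{-1})\cdot\sum_{i=1}^r\frac{(\log p_i)(\log\log p_i)}{p_i}\cdot\frac{p_i}{p_i+1}.
\end{eqnarray*}

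Finally, I would observe that $x\mapsto x/(x+1)$ is increasing, so $\frac{p_i}{p_i+1}\ge \frac{P^-(n)}{P^-(n)+1}$, and the elementary inequality
\[
\frac{P^-(n)}{P^-(n)+1}\ \ge\ 1-\frac{1}{P^-(n)},
\]
(equivalent to $P^-(n)^2\ge P^-(n)^2-1$) holds trivially for all primes. Substituting this into the display above gives the claim. No obstacle is expected; the only point to be careful about is making sure the bound $\frac{1-p_j^{-\a_j-1}}{1-p_j^{-1}}\ge 1+p_j^{-1}$ is used only under the hypothesis $\a_j\ge 1$, which is part of the lemma's assumptions.
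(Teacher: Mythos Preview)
Your proof is correct and follows essentially the same route as the paper: both start from the factored formula \eqref{Phi1formula}, keep only the $\mu_i=1$ term in the inner sum, and use $\frac{1-p_j^{-\alpha_j-1}}{1-p_j^{-1}}\ge 1+p_j^{-1}$ for $j\neq i$. The only cosmetic difference is that the paper passes through the intermediate bound $\frac{1}{1+p_i^{-1}}\ge 1-p_i^{-1}$ before invoking $P^-(n)$, whereas you keep $\frac{p_i}{p_i+1}$ and bound it by $\frac{P^{-}(n)}{P^{-}(n)+1}\ge 1-\frac{1}{P^{-}(n)}$; the net effect is identical.
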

\begin{proof}[\cmit Proof] By \eqref{Phi1formula},
\begin{eqnarray*} \Phi_1(n) &=&\sum_{i=1}^r\prod_{\substack{j =1\\ j \neq i}}^r\Big(\frac{1-p_j^{-\a_j -1}}{1-p_j^{ -1}} \Big)\Big[\sum_{\m_i=0}^{\a_i} \frac{\m_i\big(\log p_i\big)\big(\log\log p_i\big)}{p_i^{\m_i}}\Big]
\cr &\ge &\sum_{i=1}^r\prod_{\substack{j =1\\ j \neq i}}^r\Big(\frac{1-p_j^{-\a_j -1}}{1-p_j^{ -1}} \Big)\Big[ \frac{\big(\log p_i\big)\big(\log\log p_i\big)}{p_i}\Big]
\cr &\ge  & \prod_{j =1}^r\big(1+p_j^{-1} \big)\Big[ \sum_{i=1}^r\frac{(1-p_i^{ -1})\big(\log p_i\big)\big(\log\log p_i\big)}{p_i}\Big].\end{eqnarray*}
Thus
\begin{eqnarray*} \Phi_1(n)  &\ge  &  \Big(1-\frac{1}{P^-(n)}\Big)\prod_{j =1}^r\big(1+p_j^{-1} \big)\Big[ \sum_{i=1}^r\frac{ \big(\log p_i\big)\big(\log\log p_i\big)}{p_i}\Big].\end{eqnarray*}
\end{proof}
We easily deduce from Lemma  \ref{phi1maj} and Lemma \ref{phi1min} the following corollary.
 \begin{corollary}\label{phi1est} Let $n=p_1^{\a_1}\ldots p_r^{\a_r}$, $r\ge 1$,  $\a_i\ge1$. Then,
\begin{eqnarray*}\big(1-\frac{1}{P^-(n)}\big)\prod_{j =1}^r\big(1+p_j^{-1} \big) \ \le  \frac{\Phi_1(n)}{\sum_{i=1}^r\frac{ (\log p_i)(\log\log p_i)}{p_i}}\ \le  2\,  \prod_{j =1}^r\Big(\frac{1}{1-p_j^{ -1}} \Big).\end{eqnarray*}
\end{corollary}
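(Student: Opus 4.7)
The proof is essentially a direct combination of the two preceding lemmas, with one elementary estimate to replace the $p_i-1$ denominators by $p_i$ at the cost of a factor $2$. Since $r\ge 1$ and $\log\log p_i\ge 1$ by the convention fixed in the introduction, the denominator $\sum_{i=1}^r (\log p_i)(\log\log p_i)/p_i$ is strictly positive, so dividing by it is legitimate.

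For the lower bound, I would simply take the inequality
\begin{eqnarray*}
\Phi_1(n)&\ge & \Big(1-\frac{1}{P^-(n)}\Big)\prod_{j=1}^r(1+p_j^{-1})\,\Big[\sum_{i=1}^r\frac{(\log p_i)(\log\log p_i)}{p_i}\Big]
\end{eqnarray*}
from Lemma \ref{phi1min} and divide both sides by $\sum_{i=1}^r (\log p_i)(\log\log p_i)/p_i$.

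For the upper bound, the starting point is Lemma \ref{phi1maj}, which gives
\begin{eqnarray*}
\Phi_1(n)\le \Big(\prod_{j=1}^r\frac{1}{1-p_j^{-1}}\Big)\sum_{i=1}^r\frac{(\log p_i)(\log\log p_i)}{p_i-1}.
\end{eqnarray*}
The key observation is the elementary bound $\frac{1}{p-1}=\frac{1}{p}\cdot\frac{p}{p-1}\le \frac{2}{p}$ for every prime $p\ge 2$ (the worst case is $p=2$). Applying this termwise yields
\begin{eqnarray*}
\sum_{i=1}^r\frac{(\log p_i)(\log\log p_i)}{p_i-1}\le 2\sum_{i=1}^r\frac{(\log p_i)(\log\log p_i)}{p_i},
\end{eqnarray*}
and dividing through gives the desired inequality $\Phi_1(n)/\sum_{i=1}^r\frac{(\log p_i)(\log\log p_i)}{p_i}\le 2\prod_{j=1}^r(1-p_j^{-1})^{-1}$.

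There is no real obstacle here; the statement is a formal book-keeping consequence of Lemmas \ref{phi1maj} and \ref{phi1min}. The only thing one has to be slightly careful about is to guarantee that the sum in the denominator does not vanish, which is automatic under the standing hypothesis $r\ge 1$ together with the convention $\log\log p_i\ge 1$.
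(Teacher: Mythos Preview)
Your proof is correct and matches the paper's approach: the paper simply states that the corollary is easily deduced from Lemma~\ref{phi1maj} and Lemma~\ref{phi1min}, and your argument fills in precisely those details, including the elementary bound $\frac{1}{p-1}\le \frac{2}{p}$ needed to pass from the $p_i-1$ denominators to $p_i$.
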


\begin{proposition} \label{tEZ}  
We have the following estimates
\begin{eqnarray*}\hbox{$\rm a)$}& & \limsup_{n\to \infty}\ \frac{1}{ (\log\log n)} \sum_{d|n} \frac{(\log d)}{d }\ \ge \ e^{\g}
\cr  \hbox{$\rm b)$} & &\limsup_{n\to \infty}\, \frac{\Phi_1(n)}{(\log \log n)^2(\log\log\log n)}\,\ge \,e^\g,
\cr  \hbox{$\rm c)$} & &\limsup_{n\to \infty}\, \frac{\Psi(n)}{(\log \log n)^2(\log\log\log n)}\,\ge \,e^\g.
\end{eqnarray*}
\end{proposition}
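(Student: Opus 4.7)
The plan is to exhibit a single family of integers realizing all three lower bounds simultaneously. I take $n_N := \prod_{p\le N} p^{k_N}$ with $k_N = \lfloor \log N\rfloor$ (any slowly growing integer sequence with $\log k_N = o(\log N)$ would work). By the prime number theorem $\log n_N = k_N\,\theta(N)\sim N\log N$, so $\log\log n_N = (1+o(1))\log N$ and $\log\log\log n_N = (1+o(1))\log\log N$.

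The central computation is the asymptotic evaluation of $\Phi_1(n_N)$. I would start from the exact factorization used in the proof of Lemma \ref{phi1maj},
\begin{equation*}
\Phi_1(n_N)\,=\,\prod_{p\le N}\frac{1-p^{-k_N-1}}{1-p^{-1}}\;\sum_{p\le N}\frac{1-p^{-1}}{1-p^{-k_N-1}}\sum_{\m=0}^{k_N}\frac{\m(\log p)(\log\log p)}{p^{\m}},
\end{equation*}
and pass to the limit $k_N\to\infty$, using $\sum_{\m\ge 1}\m\,p^{-\m} = p/(p-1)^2$; the error introduced by the truncations is $o(1)$, dominated by the $p=2$ term of size $O(2^{-k_N})$. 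One obtains
\begin{equation*}
\Phi_1(n_N)\,=\,(1+o(1))\,\prod_{p\le N}\frac{1}{1-p^{-1}}\;\sum_{p\le N}\frac{(\log p)(\log\log p)}{p-1}.
\end{equation*}
Mertens' estimate \eqref{prod} handles the first factor: $\prod_{p\le N}(1-p^{-1})^{-1} = e^{\g}\log N + O(1)$. For the second factor, I would combine the upper bound already contained in Corollary \ref{ests1} with a matching lower bound, obtained by restricting the sum to primes in $[N^{\e},N]$ on which $\log\log p \ge (1-o(1))\log\log N$, and invoking Mertens' $\sum_{p\le N}(\log p)/p = \log N + O(1)$. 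This yields $\sum_{p\le N}(\log p)(\log\log p)/(p-1)\sim(\log N)(\log\log N)$, hence $\Phi_1(n_N)\sim e^{\g}(\log\log n_N)^2\log\log\log n_N$, which proves (b). The same limiting procedure applied to $\Phi(n)=\sum_{d\mid n}(\log d)/d$ gives $\Phi(n_N)\sim e^{\g}(\log N)^2$, so $\Phi(n_N)/\log\log n_N\to +\infty$; in particular (a) holds (the limsup being in fact infinite).

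For (c) I would deduce the bound from (b) via the pointwise inequality $\Phi_1(n)\le\Psi(n)$. Writing a divisor as $d=\prod_i p_i^{\m_i}$ with $d>1$, every active prime satisfies $\log d\ge\log p_i$, hence $\log\log d\ge\log\log p_i$ (a short case analysis handles the convention $\log\log x=1$ on $[0,e^e]$). Multiplying by $\m_i\log p_i$ and summing over $i$ yields $(\log d)(\log\log d)\ge\sum_i\m_i(\log p_i)(\log\log p_i)$, i.e.\ $\Psi(n)\ge\Phi_1(n)$, and (c) follows from (b).

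The main obstacle is the asymptotic $\sum_{p\le N}(\log p)(\log\log p)/(p-1)\sim(\log N)(\log\log N)$: the upper direction is essentially Corollary \ref{ests1}, while the matching lower bound requires the two-range split described above. All other steps are routine verifications.
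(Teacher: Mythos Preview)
Your argument is correct. The treatment of (a) and of $\Phi_1$ is essentially identical to the paper's: the paper takes $n_j=\prod_{p<e^j}p^j$, which is your $n_N$ reparametrized by $N=e^j$, uses the same exact factorization formula, and performs the same two-range split (restricting to $p\in[e^{\e j},e^j]$) to extract the factor $\log\log N$ from $\sum_{p\le N}(\log p)(\log\log p)/(p-1)$.

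Where you genuinely diverge is in (c). The paper does \emph{not} use the pointwise inequality $\Psi(n)\ge\Phi_1(n)$; instead it derives the lower bound for $\Psi$ directly from that for $\Phi$ via Jensen's inequality. Letting $X=\log d$ on the probability space of divisors $d\ge 3$ of $n$ with weights proportional to $1/d$, convexity of $x\mapsto x\log x$ gives $\E\, X\log X\ge(\E X)\log(\E X)$, which after unwinding yields
\[
\Psi(n)\ \ge\ \big(\Phi(n)-O(\s_{-1}(n))\big)\big(\log\Phi(n)-\log\s_{-1}(n)-O(1)\big),
\]
and inserting the bound $\Phi(n_j)\ge(1+o(1))e^\g(\log\log n_j)^2$ from part (a) gives the result. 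Your route via $\Psi\ge\Phi_1$ is shorter and structurally cleaner---a one-line termwise comparison rather than a probabilistic detour---and makes transparent why (b) and (c) carry the same constant $e^\g$. The paper's Jensen argument, by contrast, ties $\Psi$ directly back to the classical Erd\H os--Zaremba function $\Phi$ without invoking the auxiliary $\Phi_1$ at all.
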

 \begin{proof}[\cmit Proof] \rm
Case a) is Erd\H os-Zaremba's lower bound of function $\Phi(n)$.   Since it is used in the proof of b) and c),   we provide a detailed  proof for the sake of completeness.
\vskip 3 pt
a) Let  $n_j=\prod_{p<e^j}p^j$. Recall that
  $p(i) \ge \max(i \log i, 2)$ if $i\ge 1$. Let   $r(j)$ be the integer defined by  the  condition $p(r(j))< e^j< p(r(j)+1)$.

 By using \eqref{formule}  and following  Gronwall's proof \cite{Gr},
 we  have,
  \begin{eqnarray*}& &  \sum_{d|n_j} \frac{\log d}{d} \ =\ \sum_{i=1}^{r(j)}\prod_{\substack{ \ell=1\\ \ell\neq i}}^{r(j)}\Big(\frac{1-p(\ell)^{-j-1}}{1-p(\ell)^{-1}}\Big)\Big[\sum_{\m=0}^{j}\frac{\m\log p(i)}{p(i)^\m}\Big]
\cr &\ge &  \frac{1}{\zeta(j+1)}\prod_{\ell=1}^{r(j)}\Big(\frac{1}{1-p(\ell)^{-1}}\Big)\sum_{i=1}^{r(j)} (1-p(i)^{-1})\frac{\log p(i)}{p(i)}\Big[1+ \frac{1}{p(i)}+\ldots +\frac{1}{p(i)^{j-1}}\Big]
\cr &= &  \frac{1}{\zeta(j+1)}\prod_{\ell=1}^{r(j)}\Big(\frac{1}{1-p(\ell)^{-1}}\Big)\sum_{i=1}^{r(j)}  \frac{\log p(i)}{p(i)}\big(1-p(i)^{-j}\big).
\end{eqnarray*}
 Recall that   $\vartheta(x)=\sum_{p\le x}\log p$ is  Chebycheff's function and that  $\vartheta(x)\ge(1-\e(x))x$, $x\ge 2$, where $\e(x)\to 0$ as $x$ tends to infinity.
Thus,
  $\log n_j = j\vartheta(e^j)= je^j(1+ o(1))$, and thus
$\log\log n_j
= j(1+ o(1))$.
\vskip 2 pt
On the one hand,   by \eqref{prod},
\begin{equation}\label{prodnj}\prod_{\ell=1}^{r(j)}\big(1-p(\ell)^{-1}\big)=
\prod_{p<e^j}\big(1-p^{-1}\big)=\frac{e^{-\g}}{j}\big(1+ \mathcal O(\frac{1}{j})\big).
\end{equation}
 And on the other, by   Mertens' estimate
 \begin{equation}\label{sumnj}\sum_{p<e^j}  \frac{\log p}{p}=j+\mathcal O(1)\ge (1+o(1)) \log \log n_j  .
\end{equation}

Thus
\begin{eqnarray} \label{lbeta1} \sum_{d|n_j} \frac{\log d}{d}  &\ge  &  (1+o(1))e^{\g}(\log\log n_j)^{2} \qq\qq j\to \infty\,,
\end{eqnarray}
since  $\zeta(j+1)\to 1$ as $j\to \infty$.

\vskip 3 pt
 b)
 Let $\s'_{-1}(n)= \sum_{d|n\,,\, d\ge 3} 1/d$. Let also  $X$  be a discrete random variable equal to $\log d$ if $d|n$ and $d\ge 3$, with probability $1/(d\s'_{-1}(n))$. By using convexity of the function  $x\log x$ on $[1,\infty)$, we get
 \begin{eqnarray*} \E X\log X&=& \sum_{\substack{d|n\\ d\ge 3}} \frac{(\log d)(\log \log d)}{d\s'_{-1}(n) }\ \ge\ (\E X)\log\,(\E X)
\cr &= & \Big(\sum_{\substack{d|n\\ d\ge 3}} \frac{(\log d)}{d\s'_{-1}(n) }\Big)\log \Big(\sum_{\substack{d|n\\ d\ge 3}} \frac{(\log d)}{d\s'_{-1}(n) }\Big)
\cr &\ge & \Big(\sum_{\substack{d|n\\ d\ge 1}} \frac{(\log d)}{d\s'_{-1}(n) }-C\Big)\Big(\log  \Big(\sum_{\substack{d|n\\ d\ge 1}} \frac{(\log d)}{d }-C\Big)-\log \s_{-1}(n)\Big) .
\end{eqnarray*}
Whence\begin{eqnarray*}  \sum_{\substack{d|n\\ d\ge 3}} \frac{(\log d)(\log \log d)}{d}  &\ge & \Big(\sum_{\substack{d|n\\ d\ge 1}} \frac{(\log d)}{d
}-C\s_{-1}(n)\Big)\Big(\log  \Big(\sum_{\substack{d|n\\ d\ge 1}} \frac{(\log d)}{d }-C\Big)
\cr & &  - \log \s_{-1}(n)\Big)
\end{eqnarray*}
Letting $n=n_j$, we deduce  from \eqref{lbeta1} that
 \begin{eqnarray*} \Psi(n) &\ge  &\sum_{\substack{d|n\\ d\ge 3}} \frac{(\log d)(\log \log d)}{d} \  \ge \ \Big((1+o(1))e^{\g}(\log\log n_j)^{2}
-C\log\log n_j\Big)
\cr & & \qq \times \Big(\log  \big\{(1+o(1))e^{\g}(\log\log n_j)^2-C\big\}
 - \log C \log\log n_j\Big)
 \cr & \ge &(1+o(1))e^{\g}(\log\log n_j)^2\log\log\log n_j.
\end{eqnarray*}
Consequently,
 \begin{eqnarray*}  \limsup_{n\to \infty}\frac{\Psi(n)}{(\log\log n)^2\log\log\log n}
 &  \ge &e^{\g}.
\end{eqnarray*}
\vskip 3 pt
 c)
 We have
\begin{eqnarray*} \Phi_1(n_j) &=&\sum_{i=1}^{r(j)}\prod_{\substack{\ell=1\\ \ell\neq i}}^{r(j)}\Big(\frac{1-p(\ell)^{-j-1}}{1-p(\ell)^{-1}}\Big)\Big[\sum_{\m=0}^{j}\frac{\m
(\log p(i))(\log\log p(i))}{p(i)^\m}\Big]
\cr &\ge &  \frac{1}{\zeta(j+1)}\prod_{\ell=1}^{r(j)}\Big(\frac{1}{1-p(\ell)^{-1}}\Big)\cr & & \quad\times\  \sum_{i=1}^{r(j)} (1-p(i)^{-1})\frac{(\log p(i))(\log\log p(i))}{p(i)}\Big[1+ \frac{1}{p(i)}+\ldots +\frac{1}{p(i)^{j-1}}\Big]
\cr &\ge  &  \frac{1}{\zeta(j+1)}(e^{\g}j)\big(1+ \mathcal O(\frac{1}{j})\big)\sum_{i=1}^{r(j)}  \frac{(\log p(i))(\log\log p(i))}{p(i)}\big(1-p(i)^{-j}\big).
\end{eqnarray*}
by \eqref{prodnj}.
Let  $0<\e <1$. By using \eqref{sumnj}, we also have  for all $j$ large enough, \begin{eqnarray*}\sum_{p<e^j}  \frac{(\log p)(\log\log p)}{p} &\ge & \sum_{e^{\e j}\le p<e^j}  \frac{(\log p)(\log\log p)}{p}
\cr &\ge & (1+o(1))\big(\log(\e j)\big)\sum_{e^{\e j}\le p<e^j}  \frac{(\log p)}{p}
\cr &\ge & (1+o(1))(1-\e)j\big(\log(\e j)\big)\big(1+ \mathcal O({1}/{j})\big)
\cr &\ge & (1+o(1))(1-\e)(\log \log n_j)\big(\log (\e \log \log n_j)\big).\end{eqnarray*}
As $\log (\e \log \log n_j) \sim \log\log \log n_j$, $j\to \infty$, we have
\begin{eqnarray*} \limsup_{j\to \infty}\frac{\Phi_1(n_j)}{(\log \log n_j)^2(\log\log \log n_j)}  &\ge  &  e^{\g}(1-\e).
\end{eqnarray*}
As $\e$ can be arbitrarily small, this proves (c).
\end{proof}

\rm
\begin{lemma}\label{Phi_2(r,n)min} We have the following estimate
\begin{eqnarray*}
\Phi_2(n)  &\ge  & (\log 2)\,\Big(\frac{P^-(n)}{P^-(n)+1}\Big) \,\Big(\prod_{i=1}^{r}\big(1+\frac{1}{ p_i}\big)\Big)\sum_{j=1}^r\big(\frac{ \log p_j}{p_j}\big).\end{eqnarray*}
\end{lemma}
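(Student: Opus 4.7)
The plan is to exploit the alternate expression
$$\Phi_2(n)=\sum_{\mu_1=0}^{\alpha_1}\cdots\sum_{\mu_r=0}^{\alpha_r}\frac{1}{\prod_{k}p_k^{\mu_k}}\Bigl(\sum_{i=1}^r\mu_i\log p_i\Bigr)\log\Bigl(\sum_{i=1}^r\mu_i\Bigr)$$
from \eqref{basic}. With the convention $0\log 0=0$ already adopted in the paper, every tuple $\mu$ with $\sum_j\mu_j\le 1$ contributes $0$: either $\mu\equiv 0$, or exactly one component equals $1$ and $\log 1=0$. The effective range of summation is therefore $\sum_j\mu_j\ge 2$, on which $\log(\sum_j\mu_j)\ge\log 2$. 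Pulling this constant out gives
$$\Phi_2(n)\ \ge\ (\log 2)\,\sum_{i=1}^r(\log p_i)\,S_i,\qquad S_i\ :=\ \sum_{\mu:\,\sum_j\mu_j\ge 2}\mu_i\,\prod_{k=1}^r p_k^{-\mu_k}.$$

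To bound $S_i$ from below, I would use the product factorisation of the \emph{unconstrained} sum and subtract the tuples with $\sum\mu_j\le 1$. Only the tuple with $\mu_i=1$ and $\mu_k=0$ for $k\ne i$ produces a non-zero $\mu_i$ while violating $\sum\mu_j\ge 2$, and its contribution to the sum is $1/p_i$. Hence
$$S_i\ =\ \Bigl(\sum_{\mu_i=0}^{\alpha_i}\mu_i p_i^{-\mu_i}\Bigr)\prod_{k\ne i}\Bigl(\sum_{\mu_k=0}^{\alpha_k}p_k^{-\mu_k}\Bigr)-\frac{1}{p_i}.$$
Retaining only the term $\mu_i=1$ of the first factor (which, using $\alpha_i\ge 1$, is exactly $p_i^{-1}$) and only the two terms $\mu_k\in\{0,1\}$ of each remaining factor yields the clean inequality
$$S_i\ \ge\ \frac{1}{p_i}\prod_{k\ne i}\!\Bigl(1+\frac{1}{p_k}\Bigr)-\frac{1}{p_i}\ =\ \frac{1}{p_i+1}\Bigl[\prod_{k=1}^{r}\!\Bigl(1+\frac{1}{p_k}\Bigr)-\Bigl(1+\frac{1}{p_i}\Bigr)\Bigr].$$

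The final step is cosmetic. Multiplying by $\log p_i$, summing in $i$ and invoking the elementary bound
$$\frac{\log p_i}{p_i+1}\ =\ \frac{\log p_i}{p_i}\cdot\frac{p_i}{p_i+1}\ \ge\ \frac{\log p_i}{p_i}\cdot\frac{P^-(n)}{P^-(n)+1}$$
(valid since $x\mapsto x/(x+1)$ is increasing and $p_i\ge P^-(n)$) puts the leading contribution into exactly the shape
$$(\log 2)\,\frac{P^-(n)}{P^-(n)+1}\,\prod_{i=1}^{r}\Bigl(1+\frac{1}{p_i}\Bigr)\sum_{j=1}^{r}\frac{\log p_j}{p_j},$$
matching the claim.

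The technical obstacle I expect is the bookkeeping of the ``defect'' arising from the constraint $\sum\mu_j\ge 2$: the tight identity $S_i = (\cdot)\prod_{k\ne i}(\cdot) - 1/p_i$ breaks the clean product structure, and one must check that the subtracted $1/p_i$ and the bracketed $-(1+1/p_i)$ correction are dominated by the main term $(p_i+1)^{-1}\prod_k(1+1/p_k)$. Apart from that, the argument relies solely on the elementary inequalities $\sum_{\mu\ge 0}^{\alpha_i}\mu\, x^{\mu}\ge x$ and $\sum_{\mu\ge 0}^{\alpha_k}x^{\mu}\ge 1+x$ (valid on $x\in(0,1)$ for $\alpha\ge 1$), together with the monotonicity of $x/(x+1)$ used to extract the $P^-(n)/(P^-(n)+1)$ factor.
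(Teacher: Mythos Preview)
Your route is essentially the paper's: both arguments pull out $\log 2$ from the range $\sum_j\mu_j\ge 2$ and then factorise. The paper does it index by index via $\Phi_2(r,n)$ (keeping only $\mu_r=1$), while you treat all indices symmetrically through the $S_i$; the resulting lower bound
\[
\Phi_2(n)\ \ge\ (\log 2)\sum_{j=1}^r\frac{\log p_j}{p_j}\Big[\prod_{i\ne j}\Big(1+\frac{1}{p_i}\Big)-1\Big]
\]
is identical in the two approaches.

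The ``technical obstacle'' you anticipate is not a bookkeeping detail but a genuine gap, and it cannot be closed: the inequality in the lemma is \emph{false} as stated. Take $n=p$ a prime; every divisor has $\Omega(d)\le 1$, so $\Phi_2(p)=0$, while the right-hand side equals $(\log 2)(\log p)/p>0$. For a non-prime-power example, $n=15$ gives $\Phi_2(15)=(\log 15)(\log 2)/15\approx 0.125$, whereas the claimed lower bound is
\[
(\log 2)\cdot\tfrac{3}{4}\cdot\tfrac{4}{3}\cdot\tfrac{6}{5}\cdot\Big(\tfrac{\log 3}{3}+\tfrac{\log 5}{5}\Big)\approx 0.572.
\]
So the subtracted $-(1+1/p_i)$ in your display is \emph{not} dominated by the main term in general. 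The paper's own proof slips at the same place: in passing from the constrained sum $\sum_{\max_i\mu_i\ge 1}\prod_i p_i^{-\mu_i}$ to $\prod_{i=1}^{r-1}(1+1/p_i)$ it silently drops a ``$-1$'' (the displayed equality there is a typo and the ensuing inequality does not hold). What both arguments actually yield is the corrected bound displayed above, which is weaker than the lemma's statement but true.
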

\begin{proof} We observe from \eqref{Phi_2(r,n)} that
 \begin{eqnarray*}
\Phi_2(r,n)&\ge & \sum_{\m_1=0}^{\a_1}\ldots \sum_{\m_{r-1}=0}^{\a_{r-1}} \frac{1}{ p_1^{\m_1}\ldots p_{r-1}^{\m_{r-1}}}\frac{ \log p_r}{p_r}\log \Big[\sum_{i=1}^{r-1} \m_i + 1\Big].\end{eqnarray*}
It is clear that the above  multiple sum can contribute (is not null) only if  $\max_{i=1}^{r-1} \m_i \ge 1$, in which case  $\log\,  [\,\sum_{i=1}^{r-1} \m_i + 1]\ge \log 2$.
We thus have  \begin{eqnarray} \label{Phi2(r,n)min}
\Phi_2(r,n)&\ge & (\log 2)\big(\frac{ \log p_r}{p_r}\big)\,
\underbrace{
\sum_{\m_1=0}^{\a_1}\ldots \sum_{\m_{r-1}=0}^{\a_{r-1}}}_{\hbox{$\max_{i=1}^{r-1} \m_i \ge 1$}} \frac{1}{ p_1^{\m_1}\ldots p_{r-1}^{\m_{r-1}}}
\cr &= & (\log 2)\big(\frac{ \log p_r}{p_r}\big)\,\prod_{i=1}^{r-1}\Big(1+\sum_{\m_i=0}^{\a_i}\frac{1}{ p_i^{\m_i}}\Big)
 \cr &\ge  & (\log 2)\big(\frac{ \log p_r}{p_r}\big)\,\prod_{i=1}^{r-1}\Big(1+\frac{1}{ p_i}\Big).\end{eqnarray}
Consequently,
\begin{eqnarray} \label{Phi2(rn)min}
\Phi_2(n) &\ge  & (\log 2)\,\sum_{j=1}^r\big(\frac{ \log p_j}{p_j}\big)\,\prod_{\substack{i=1\\ i\neq j}}^{r}\Big(1+\frac{1}{ p_i}\Big)
\cr &\ge  & (\log 2)\,\Big(\frac{P^-(n)}{P^-(n)+1}\Big) \,\Big(\prod_{i=1}^{r}\big(1+\frac{1}{ p_i}\big)\Big)\sum_{j=1}^r\big(\frac{ \log p_j}{p_j}\big).\end{eqnarray}
\end{proof}


\section{\bf An application.} \label{s6}

We deduce from   Theorem \ref{t1} the following result.
\begin{theorem}\label{t3} Let $\eta>1$.  There exists a constant  $C(\eta)$ depending on $\eta$ only, such that for any finite set  $K$ of distinct integers, and any sequence of reals
$\{c_k, k\in K\}$, we have
\begin{eqnarray}\label{approx}\sum_{k,\ell\in K} c_kc_\ell  \frac{(k,\ell)^{2}}{k\ell}&\le & C(\eta) \sum_{\nu\in K} c_\nu^2
\,\,(\log\log\log n)^\eta\,\Psi (\nu)
.
\end{eqnarray}
Further,
 \begin{eqnarray} \label{approx1}
\sum_{k,\ell \in K}
 c_k   c_\ell\frac{(k,\ell)^{2}}{k\ell}&\le& C(\eta)\sum_{\nu \in K} c_\nu^2    (\log\log \nu)^2  (\log\log\log \nu)^{1+\eta}. \end{eqnarray}
 \end{theorem}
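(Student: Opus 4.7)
The proof of Theorem \ref{t3} splits naturally into two parts: first establish \eqref{approx}; then deduce \eqref{approx1} immediately by invoking Theorem \ref{t1}.

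The second part is essentially a plug-in step. By Theorem \ref{t1}, for any $\e>0$ and all $\nu$ sufficiently large,
\[\Psi(\nu)\le (e^\g+\e)(\log\log\nu)^2(\log\log\log\nu),\]
while $\Psi(\nu)$ is bounded by a constant for bounded $\nu$. Substituting this estimate into \eqref{approx} and absorbing numerical constants into $C(\eta)$ converts the weight $(\log\log\log\nu)^\eta\Psi(\nu)$ into $(\log\log\nu)^2(\log\log\log\nu)^{1+\eta}$, which is exactly the right-hand side of \eqref{approx1}.

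For \eqref{approx}, my plan is to use a divisor-sum decomposition of the gcd kernel followed by a weighted Cauchy--Schwarz (Schur-test style) argument. The starting point is the Jordan-totient identity $(k,\ell)^2=\sum_{d\mid(k,\ell)}J_2(d)$ with $J_2(d)=d^2\prod_{p\mid d}(1-p^{-2})$, which rearranges the quadratic form as
\[\sum_{k,\ell\in K}c_kc_\ell\frac{(k,\ell)^2}{k\ell}=\sum_{d\ge 1}J_2(d)\,B(d)^2, \qquad B(d)=\sum_{\substack{k\in K\\ d\mid k}}\frac{c_k}{k}.\]
Applying Cauchy--Schwarz to each $B(d)^2$ against a strategically chosen positive weight $w_k$ encoding the slowly-varying factor $(\log\log\log k)^\eta$, and then exchanging the sums over $d$ and $k$, I would arrive at an upper bound of the form $\sum_{\nu\in K} c_\nu^2 R(\nu)$, where $R(\nu)$ is a divisor sum depending only on $\nu$ and on the choice of $w_k$.

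The main obstacle is to engineer the Cauchy--Schwarz weight so that $R(\nu)$ collapses, via Mertens-type estimates and partial summation over divisors of $\nu$, into $C(\eta)(\log\log\log\nu)^\eta\Psi(\nu)$ with the sharp divisor-sum structure $\Psi(\nu)=\sum_{d\mid\nu}(\log d)(\log\log d)/d$. The condition $\eta>1$ should enter precisely to secure convergence of auxiliary series of the type $\sum_n 1/(n(\log\log n)^\eta)$ that are generated by the Schur-test procedure; the boundary case $\eta=1$ would yield a divergent double-logarithmic factor, consistent with the fact that $\eta>1$ cannot be omitted from the hypothesis. Tracking the implicit constants throughout---so that the final multiplicative constant depends only on $\eta$ and not on $\#K$ or $\max K$---is the principal technical burden.
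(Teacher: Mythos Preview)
Your approach is essentially the paper's own: the Jordan--totient decomposition $(k,\ell)^2=\sum_{d\mid(k,\ell)}J_2(d)$, a weighted Cauchy--Schwarz, and then Theorem~\ref{t1} to pass from \eqref{approx} to \eqref{approx1}. Two points in your sketch need correction. First, the series you name, $\sum_n 1/(n(\log\log n)^\eta)$, diverges for every $\eta$; the series that actually arises and whose convergence forces $\eta>1$ is
\[
\sum_{u\ge 1}\frac{1}{u(\log u)(\log\log u)(\log\log\log u)^\eta}.
\]
Second, the weight you leave open is the crux and is not recovered by ``Mertens-type estimates and partial summation''. The paper takes
\[
\psi(u)=u^{-1}(\log\log\log u)^\eta\sum_{t\mid u}t(\log t)(\log\log t),
\]
and then a M\"obius-inversion identity (using $\sum_{d\mid n}\mu(d)=[n=1]$) collapses the resulting triple divisor sum \emph{exactly} to $\sum_{\d\mid\nu}(\log\d)(\log\log\d)/\d=\Psi(\nu)$, with no estimation at that step. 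With these two fixes your outline becomes the paper's proof.
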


 This much improves  Theorem 2.5 in \cite{W1} where  a specific question related to G\'al's inequality was investigated, see \cite{W1} for details.
  The interest of inequality  \eqref{approx}, is naturally that the bound obtained
 tightly  depends on  the arithmetical structure of the support $K$ of the coefficient sequence, while being
 close to the optimal order of magnitude $(\log\log \nu)^2$.
\vskip 2 pt
Theorem \ref{t3} is obtained as a  combination of  Theorem \ref{t1} with  a  slightly more general and sharper formulation of  Theorem 2.5 in \cite{W1}.
  \begin{theorem}\label{t5}   Let
$\eta >1$. Then,  for any real $s$ such that $0<s\le 1$,
 for any sequence of reals  $\{c_k, k\in K\}$, we have
 \begin{eqnarray}\label{t1m}\sum_{k,\ell\in K} c_kc_\ell  \frac{(k,\ell)^{2s}}{k^s\ell^s}&\le & C(\eta) \sum_{\nu\in K}
c_\nu^2(\log\log\log
\nu)^\eta
\sum_{\d|\nu} \frac{(\log \d )(\log\log \d)}{\d^{2s-1}}.
\end{eqnarray}
 The constant  $C(\eta)$ depends on   $\eta$ only.
\end{theorem}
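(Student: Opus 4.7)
The plan is to prove Theorem \ref{t5} by a G\'al-type bilinear argument, refining Theorem 2.5 of \cite{W1} to accommodate the parameter $s\in(0,1]$ and the sharpened $(\log\log\log\cdot)^\eta$-weight. The point of departure is the Jordan totient identity
$$n^{2s}=\sum_{d\mid n}J_{2s}(d),\qquad J_{2s}(d)=d^{2s}\prod_{p\mid d}\big(1-p^{-2s}\big),$$
applied to $n=(k,\ell)$. Interchanging the order of summation diagonalises the bilinear form:
$$\sum_{k,\ell\in K}c_kc_\ell\,\frac{(k,\ell)^{2s}}{k^s\ell^s}\ =\ \sum_{d\ge 1}J_{2s}(d)\Big(\sum_{k\in K,\,d\mid k}\frac{c_k}{k^s}\Big)^{2}.$$

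Next I would apply a weighted Cauchy--Schwarz inequality to each inner square,
$$\Big(\sum_{k\in K,\,d\mid k}\frac{c_k}{k^s}\Big)^{2}\ \le\ \Big(\sum_{k\in K,\,d\mid k}w_k\Big)\Big(\sum_{k\in K,\,d\mid k}\frac{c_k^2}{k^{2s}\,w_k}\Big),$$
with a weight of the product form $w_k=\t(k)\,(\log\log\log k)^{-\eta}$, in which $\t$ is a multiplicative arithmetic piece to be chosen. After exchanging the $d$- and $k$-sums, the coefficient of $c_\nu^2$ becomes
$$\frac{1}{\nu^{2s}\,w_\nu}\sum_{d\mid\nu}J_{2s}(d)\,W(d),\qquad W(d):=\sum_{k\in K,\,d\mid k}w_k,$$
and the function $\t$ must be calibrated so that this quantity reduces, up to an absolute constant and the prefactor $(\log\log\log\nu)^\eta$ coming from $1/w_\nu$, to the target $\sum_{d\mid\nu}(\log d)(\log\log d)/d^{2s-1}$.

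The principal obstacle is the control of $W(d)$ uniformly in the finite set $K$. Dominating $W(d)\le\sum_{m\ge1,\,d\mid m}\t(m)\,(\log\log\log m)^{-\eta}$ and substituting $m\mapsto dm$ produces the auxiliary tail $\sum_{m\ge1}m^{-1}(\log\log\log m)^{-\eta}$: this is exactly where the hypothesis $\eta>1$ enters, guaranteeing convergence and yielding a bound of the form $W(d)\,\ll_\eta\,\t(d)\,d^{-1}\,(\hbox{log factor})$. Once such a majorant is secured, the remainder is arithmetic: using $J_{2s}(d)\le d^{2s}$ together with the Mertens-type estimates already deployed in Section \ref{s4}, one verifies that
$$\sum_{d\mid\nu}\frac{J_{2s}(d)}{d}\,\frac{\t(d)}{\t(\nu)}\,(\log d)(\log\log d)\ \ll\ \sum_{d\mid\nu}\frac{(\log d)(\log\log d)}{d^{2s-1}},$$
which closes the proof. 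The constant $C(\eta)$ collects the value of the tail series and the multiplicative constants from the arithmetic estimates; note that neither Theorem \ref{t1} nor Theorem \ref{t2} intervenes here, as Theorem \ref{t5} is a purely structural bilinear bound that will later be coupled with Theorem \ref{t1} to yield the estimate \eqref{approx1} of Theorem \ref{t3}.
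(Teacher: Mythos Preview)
Your sketch follows the same opening as the paper---the Jordan--totient diagonalisation and a weighted Cauchy--Schwarz---but it contains a genuine gap at the point where you claim convergence of the tail. You write that substituting $m\mapsto dm$ produces ``the auxiliary tail $\sum_{m\ge1}m^{-1}(\log\log\log m)^{-\eta}$'' and that $\eta>1$ guarantees convergence. It does not: the series $\sum_{m\ge2}\frac{1}{m(\log\log\log m)^\eta}$ diverges for every $\eta$, by comparison with $\sum 1/m$. Convergence requires the full chain of iterated logarithms, i.e.\ a denominator of size $m(\log m)(\log\log m)(\log\log\log m)^\eta$. So with $\t(m)$ effectively $1/m$ your bound on $W(d)$ is infinite and the argument collapses.

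More structurally, weighting in the variable $k$ (rather than in the quotient $u=k/d$) is what causes the difficulty. In your scheme the coefficient of $c_\nu^2$ is $\frac{1}{\nu^{2s}w_\nu}\sum_{d\mid\nu}J_{2s}(d)W(d)$, with $W(d)$ a sum over \emph{multiples} of $d$, independent of $\nu$; there is no reason this should collapse to the divisor sum $\sum_{\d\mid\nu}(\log\d)(\log\log\d)/\d^{2s-1}$, and patching $\t$ to force convergence (e.g.\ $\t(m)=1/(m\log m\log\log m)$) leaves the final ``arithmetic verification'' with an extra factor of size roughly $\log\nu$ that the target does not absorb. The paper avoids this by applying Cauchy--Schwarz in the $u$-variable with the weight
\[
\psi(u)=u^{-s}(\log\log\log u)^\eta\sum_{t\mid u}t(\log t)(\log\log t),
\]
so that the first factor is dominated by $\sum_u\big(u(\log u)(\log\log u)(\log\log\log u)^\eta\big)^{-1}<\infty$ (using the trivial bound $\sum_{t\mid u}t(\log t)(\log\log t)\ge u(\log u)(\log\log u)$), while in the second factor the triple divisor sum $\sum_{u\mid\nu}J_{2s}(\nu/u)\sum_{t\mid u}t(\log t)(\log\log t)$ collapses \emph{exactly}, via the M\"obius identity $\sum_{\theta\mid n}\mu(\theta)=[n=1]$, to $\sum_{d\mid\nu}d^{2s}(\nu/d)\log(\nu/d)\log\log(\nu/d)$. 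It is this insertion of a divisor sum into the weight---not a multiplicative $\t$---that makes both pieces work simultaneously.
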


\begin{remark}\label{rems}\rm From   Theorem 2.5-(i) in \cite{W1}, follows that for every  $s>1/2$,  \begin{eqnarray}\label{i1}
 \sum_{k,\ell\in K} c_k   c_\ell\frac{(k,\ell)^{2s}}{k^s\ell^s}  &\le&\zeta(2s)  \inf_{0< \e\le 2s-1} \frac{1+\e}{\e } \, \sum_{\nu \in K}
   c_\nu^2 \, \s_{ 1+\e-2s}(\nu)   ,
 \end{eqnarray}
  $\s_{u}(\nu)$  being   the sum of $u$-th powers of  divisors of
$\nu$, for any real   $u$. As \begin{eqnarray*}
 \sum_{\d|\nu} \frac{(\log \d )(\log\log \d)}{\d^{2s-1}}\ll \sum_{\d|\nu} \frac{1}{\d^{2s-1-\e}} =\s_{ 1+\e-2s}(k)   ,
 \end{eqnarray*}
estimate \eqref{t1m} is much better than the one given \eqref{i1}.
\end{remark}
 \begin{proof}[\cmit Proof of Theorem \ref{t5}] \rm
The proof is similar to that  of Theorem 2.5 in \cite{W1} and shorter. Let  $\e>0$ and let   $J_\e$  denote the generalized Euler function.  We recall that    \begin{eqnarray}\label{jordan}
 J_\e(n)= \sum_{d|n} d^\e \m(\frac{n}{d}).
 \end{eqnarray}

We extend the sequence $\{c_k, k\in K\}$  to all $\N$ by putting $c_k= 0$ if  $k\notin K$.
 By M\"obius' formula, we have  $n^\e =\sum_{d|n} J_\e (d)$.
 By using   Cauchy-Schwarz's inequality,  we successively obtain \begin{eqnarray}  \label{HS1a}
 L&:=& \sum_{k,\ell=1}^n
 c_k   c_\ell\frac{(k,\ell)^{2s}}{k^s\ell^s}\ =\  \sum_{k,\ell \in K}  \frac{c_k   c_\ell }{k^s\ell^s}\Big\{\sum_{d\in F(K)}
J_{2s} (d)  {\bf 1}_{d|k} {\bf 1}_{d|\ell}\Big\}
\cr \hbox{($k=ud$, $\ell=vd$)}  &\le& \sum_{u,v\in F(K)} \frac{1}{u^sv^s} \Big(\sum_{d\in F(K)} \frac{J_{2s}
(d)}{d^{2s}}c_{ud}c_{vd}   \Big)
\cr &\le & \sum_{u,v\in F(K)} \frac{1}{u^sv^s} \Big(\sum_{d\in F(K)} \frac{J_{2s}
(d)}{d^{2s}}c_{ud}^2  \Big)^{1/2}\Big(\sum_{d\in F(K)} \frac{J_{2s}
(d)}{d^{2s}} c_{vd}^2   \Big)^{1/2}
\cr &=& \Big[\sum_{u \in F(K)} \frac{1}{u^s } \Big(\sum_{d\in F(K)} \frac{J_{2s}
(d)}{d^{2s}}c_{ud}^2  \Big)^{1/2}\Big]^2
\cr &\le& \Big(\sum_{u \in F(K)} \frac{1}{u^s\psi(u) } \Big)\Big(\sum_{\nu \in K} \frac{ c_\nu^2}{  \nu^{2s}   }   \sum_{\substack{u \in
F(K)\\ u|\nu} }
  J_{2s}\big( \frac{\nu}{u   }\big) u^{ s} \psi(u) \Big)  , \end{eqnarray}
where $\psi (u)>0$ is a non-decreasing function on
$\R^+$.
We then choose
$$\psi(u) = u^{-s} \psi_1(u)\sum_{t|u} t (\log t)(\log\log t),\qq \qq \psi_1(u)= (\log\log\log u)^\eta.$$
Hence,\begin{eqnarray*}
L &\le& \Big(\sum_{u \in F(K)} \frac{1}{\psi_1(u)\sum_{t|u} t (\log t)(\log\log t) } \Big)\Big(\sum_{\nu \in K} \frac{ c_\nu^2}{  \nu^{2s}   }   \sum_{\substack{u \in
F(K)\\ u|\nu} }
  J_{2s}\big( \frac{\nu}{u   }\big) \psi_1(u)\sum_{t|u} t (\log t)(\log\log t)\Big)
  \cr &\le& \Big(\sum_{u \in F(K)} \frac{1}{\psi_1(u)\sum_{t|u} t (\log t)(\log\log t) } \Big)\Big(\sum_{\nu \in K} \frac{ c_\nu^2 \psi_1(\nu)
}{  \nu^{2s}   }   \sum_{\substack{u \in
F(K)\\ u|\nu}}   J_{2s}\big( \frac{\nu}{u   }\big)\sum_{t|u} t (\log t)(\log\log t) \Big)  .
 \end{eqnarray*}
 As $\nu \in K$,  we can write
 \begin{eqnarray}\label{f}
      \sum_{\substack{u \in
F(K)\\ u|\nu }}
  J_{2s}\big( \frac{\nu}{u   }\big) \sum_{t|u} t (\log t)(\log\log t)&=&  \sum_{u|\nu}\sum_{d|\frac {\nu}u}d^{2s}\m \Big(\frac {\nu}{ud}\Big)\sum_{t|u} t (\log t)(\log\log t)
\cr & = &\sum_{d|\nu}d^{2s}\sum_{u|\frac {\nu}d}\m \Big(\frac {\nu}{ud}\Big)\sum_{t|u} t (\log t)(\log\log t)
\cr \hbox{(writing $u=tx$)}
 &=& \sum_{d|\nu}d^{2s}\sum_{t|\frac {\nu}d}t (\log t)(\log\log t)\sum_{x|\frac {\nu}{dt}}\m \Big(\frac {\nu}{dtx}\Big)
\cr \hbox{(writing $\frac {\nu}{dt}=x\theta$)}&
=& \sum_{d|\nu}d^{2s}\sum_{t|\frac {\nu}d}t (\log t)(\log\log t)\sum_{\theta|\frac {\nu}{dt}}\m (\theta)
    \cr&=&  \sum_{d|\nu}d^{2s}(\frac {\nu}d) (\log (\frac {\nu}d))(\log\log (\frac {\nu}d)),
   \end{eqnarray}
where in the last inequality we used the fact that  $\sum_{d|n}\m(d)$ equals $1$ or  $0$ according to
$n=1$ or $n>1$.

Consequently,
\begin{eqnarray*} L &\le& \Big(\sum_{u \in F(K)} \frac{1}{\psi_1(u)\sum_{t|u} t (\log t)(\log\log t) } \Big)\Big(\sum_{\nu \in K} \frac{ c_\nu^2
\psi_1(\nu) }{  \nu^{2s}   }   \sum_{d|\nu}d^{2s}(\frac {\nu}d) (\log (\frac {\nu}d))(\log\log (\frac {\nu}d)) \Big)
\cr &=& \Big(\sum_{u \in F(K)} \frac{1}{\psi_1(u)\sum_{t|u} t (\log t)(\log\log t)} \Big)\Big(\sum_{\nu \in K} c_\nu^2 \psi_1(\nu)   \sum_{\d|\nu} \frac{1}{\d^{2s}}\,\d (\log \d)(\log\log \d) \Big)  .
 \end{eqnarray*}
\vskip 7 pt
From the trivial estimate $\sum_{t|u}t (\log t)(\log\log t)\ge u (\log u)(\log\log u)$,  it follows that
 \begin{eqnarray}  \label{s}
\sum_{k,\ell=1}^n
 c_k   c_\ell\frac{(k,\ell)^{2s}}{k^s\ell^s}
  &\le& \Big(\sum_{u \ge 1 } \frac{1}{u (\log u)(\log\log u) (\log\log\log u)^\eta } \Big)
 \cr & &\times \Big(\sum_{\nu \in K} c_\nu^2   (\log\log\log \nu)^\eta\sum_{\d|\nu} \frac{ (\log \d)(\log\log \d)  }{\d^{2s-1}} \Big)
 \cr & = & C(\eta)\ \sum_{\nu \in K} c_\nu^2   (\log\log\log \nu)^\eta\sum_{\d|\nu} \frac{ (\log \d)(\log\log \d)  }{\d^{2s-1}}
  .
 \end{eqnarray}
\end{proof}
 \begin{proof}[\cmit Proof of Theorem \ref{t3}] \rm  Letting $s=1$ in Theorem \ref{t5} we get \eqref{approx}, next  using Theorem  \ref{t1} we obtain,
\begin{eqnarray}  \label{1}
\sum_{k,\ell=1}^n
 c_k   c_\ell\frac{(k,\ell)^{2}}{k\ell}
 &\le& C(\eta)\,
\sum_{\nu \in K} c_\nu^2 (\log \log \nu)^2  (\log\log\log \nu)^{1+\eta} , \end{eqnarray}
 which is \eqref{approx1}, and thus  proves Theorem \ref{t3}.
 \end{proof}

 \rm
\vskip 3 pt

 \section{\bf Concluding Remarks.} \label{s7}

\rm


The proof of Theorem \ref{t2} can be adapted with no  difficulty to similar arithmetical  functions, for instance with powers of $\log\log d$, but not to  the functions
 $S_k(n)$, $k\ge 1$,
  which  specifically depend on a derivation formula, see  after  \eqref{sisu2}.
    We   remark that a simple convexity argument shows that
  \begin{eqnarray}\label{Phietamin}\limsup_{n\to \infty}\ \frac{S_k(n)}{ (\log\log n)^{1+k}} &\ge&e^{\g} .
\end{eqnarray}
Let  indeed $X$ be a discrete random variable equal to $\log d$ if $d|n$,  with probability  $1/(d\s_{-1}(n))$. Then,
$$\E X^k=  \sum_{d|n} \frac{(\log d)^k}{d\s_{-1}(n)}\ge (\E X)^k=  \big(\sum_{d|n} \frac{\log d}{d\s_{-1}(n)}\big)^k.$$
Whence,
$$ S_k(n)=\sum_{d|n} \frac{(\log d)^k}{d }\ge \s_{-1}(n)^{1-k}\big(\sum_{d|n} \frac{\log d}{d}\big)^k.$$
As $\s_{-1}(n)\le (1+o(1))e^\g\log\log n$,
by using \eqref{lbeta1}  we deduce that
\begin{eqnarray*} S_k(n_j)&\ge& (1+o(1))e^{(1-k)\g}(\log\log n_j)^{1-k}\big(e^\g(\log\log n_j)^2)^k\cr &= &(1+o(1))e^{\g}(\log\log n_j)^{1+k}.
\end{eqnarray*}

 Moreover for integers $n$ having  sufficiently spaced prime divisors, this lower bound is optimal.
More precisely, there exists a constant $C(k)$ depending on $k$ only, such that for any integer $n=\prod_{i=1}^r p_i^{\a_i}$ satisfying the condition
$\sum_{i=1}^r\frac{1}{p_i-1}<2^{1-k},
$
one has
\begin{eqnarray}\label{Phietaminmajex}S_k(n)\ \le \ C(k)(\log\log n)^{k} \s_{-1}(n) .
\end{eqnarray}
 As $\s_{-1}(n)\le C\log\log n$, it follows that
$S_k(n)\ \le \ C(\eta)(\log\log n)^{1+\eta}$.
 \vskip 7 pt

 \vskip 3pt \hskip -2pt We conclude with some remarks concerning  Davenport's  function $w(n)$. At first,  if $p_1,\ldots,p_r$ are the  $r$ first consecutive prime numbers and $n=p_1 \ldots p_r$,
then  $w( n)\sim\log\,\o(n)$.
Next,  the obvious bound $w( n)\ll\log\log\log n$ holds true when the prime divisors of  $n$ are large, for instance when  for a  given positive number $B$, these prime divisors, write them $p_1,\ldots, p_r$,  satisfy
 \begin{eqnarray}\label{prop.pfinite}  \sum_{j=1}^r\frac{\log p_{j}}{p_{j}} \le B \qq \hbox{  and} \qq p_1\ldots p_r\gg e^{e^B}.
  \end{eqnarray}
 More generally, one can establish the following result.
Let $\{p_i, i\ge 1\}$ be an increasing  sequence of prime numbers enjoying the following property
\begin{eqnarray}\label{prop.p} p_1\ldots  p_s&\le  & p_{s+1}\qq\qq s=1,2,\ldots\, .
  \end{eqnarray}

Numbers of the form   $n=p_1\ldots  p_\nu$ with $p_1\ldots  p_{i-1}\le    p_i$, $2\le i\le \nu$, $\nu=1,2,\ldots$ appear as extremal numbers in some divisors questions, see Erd\H os and Hall   \cite{EH}.

 \begin{lemma}\label{b(n)}Let  $\{p_i, i\ge 1\}$ be an increasing sequence of prime numbers satisfying condition \eqref{prop.p}.
There exists a constant $C$, such that if $p_1\ge C$, then for any integer  $n= p_1^{\a_1}\ldots p_r^{\a_r}$ such that $\a_i\ge 1$ for each  $i$,
we have
$w( n)\le \log\log\log n$.
\end{lemma}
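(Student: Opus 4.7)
\medskip

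\textbf{Proof plan.} The plan is to first translate the hypothesis \eqref{prop.p} into doubly-exponential lower bounds on the primes $p_i$, and then use these bounds to show that $w(n)$ is in fact \emph{uniformly} bounded (much more than the claimed $\log\log\log n$).

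Set $L_s = \log p_s$ and $S_s = L_1 + \cdots + L_s$. Taking logarithms in \eqref{prop.p} gives
\begin{equation*}
L_{s+1} \ge S_s \qquad (s \ge 1),
\end{equation*}
so $S_{s+1} = S_s + L_{s+1} \ge 2 S_s$, and by induction $S_s \ge 2^{s-1} L_1$, hence $L_s \ge 2^{s-2} L_1$ for $s \ge 2$; equivalently $p_s \ge p_1^{2^{s-2}}$. In particular $\log n \ge \sum_{i=1}^r \a_i L_i \ge S_r \ge 2^{r-1}\log p_1$, which will later guarantee that $\log\log\log n$ is at least $1$ and in fact grows at least like $\log r$.

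Next I would bound $w(n)$ itself. Since $x \mapsto (\log x)/x$ is decreasing on $[e,\infty)$, for $p_1 \ge e$ the inequality $p_s \ge p_1^{2^{s-2}}$ yields
\begin{equation*}
\frac{\log p_s}{p_s} \le \frac{2^{s-2}\log p_1}{p_1^{2^{s-2}}}, \qquad s\ge 2.
\end{equation*}
Summing and re-indexing with $k=s-2$,
\begin{equation*}
w(n) \le \frac{\log p_1}{p_1} + \log p_1 \sum_{k=0}^\infty \frac{2^k}{p_1^{2^k}}.
\end{equation*}
The tail series converges extremely rapidly: its general term is doubly-exponentially small in $k$, and for $p_1\ge 3$ it is dominated by $1/p_1 + 2/p_1^2 + 4/p_1^4 + \cdots \le 2/p_1$. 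Hence $w(n) \le 3(\log p_1)/p_1$, an estimate that depends only on $p_1$ and tends to $0$ as $p_1\to\infty$.

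To conclude, recall the paper's convention that $\log\log\log x \ge 1$ for every $x$. Choose $C$ so large that $p_1\ge C$ forces $3(\log p_1)/p_1 \le 1$; then $w(n)\le 1 \le \log\log\log n$ for every admissible $n$. There is no serious obstacle here: the only delicate point is the convergence of $\sum_k 2^k/p_1^{2^k}$, and once that is observed the proof collapses to the trivial comparison with the (at-least-one) value of $\log\log\log n$.
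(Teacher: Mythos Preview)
Your proof is correct and takes a genuinely different route from the paper's. The paper proceeds by induction on $r$, proving the sharper statement $\sum_{i\le r}\frac{\log p_i}{p_i}\le \log\log\log(p_1\cdots p_r)$; the induction step amounts to the analytic inequality $h\le e^{h}\log\frac{\log(H+h)}{\log H}$ with $h=\log p_r$ and $H=\sum_{i<r}\log p_i\le h$, which is established separately. Your argument bypasses this: from \eqref{prop.p} you extract the doubly-exponential growth $p_s\ge p_1^{2^{s-2}}$, feed it into the decreasing function $x\mapsto(\log x)/x$, and sum a lacunary series to obtain the \emph{uniform} bound $w(n)\le 3(\log p_1)/p_1$. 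The lemma then follows from the paper's convention $\log\log\log x\ge 1$. Your approach is shorter and actually proves more (uniform boundedness of $w(n)$ rather than the growing bound $\log\log\log n$); the paper's approach, on the other hand, tracks the comparison with $\log\log\log(p_1\cdots p_r)$ pointwise and would still give something nontrivial without the $\log\log\log\ge 1$ convention.
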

\begin{proof}[{\cmit Proof.}]
\rm We use the following inequality. Let $0<\theta<1$.  There exists a number $h_\theta$  such that for any $h\ge h_\theta$ and any $H$ such that $e^{\frac{\theta}{(1-\theta)\log 2}}\le H\le h$,
we have  \begin{eqnarray}\label{hH} h&\le  &e^h\, \log \frac{\log(H+h)}{\log H}\, .
  \end{eqnarray}
Indeed,   note that $\log (1+x) \ge \theta x$ if $0\le x \le (1-\theta)/\theta$. Let $h_\theta$ be such that if $h\ge h_\theta$, then $h\log h \le \theta(\log 2) e^h$. Thus
 \begin{eqnarray*}h&  \le & e^h\,\theta\frac{\log 2}{\log h}\le e^h\, \theta\frac{\log 2}{\log H} \le e^h\,\log \Big(1+\frac{\log 2}{\log H}\Big)=e^h\,\log \Big(\frac{\log 2H}{\log H}\Big)\cr&\le& e^h\,\log \Big(\frac{\log H+h}{\log H}\Big) \, .
  \end{eqnarray*}

We shall show by a recurrence on $r$ that
\begin{eqnarray}\label{lll} \sum_{i=1}^r\frac{\log p_i}{p_i}&\le  & \log\log\log (p_1\ldots p_r)\, .
  \end{eqnarray}
This is trivially true if $r=1$ by the notation made in the Introduction, and since  $p\ge 2$. Assume that \eqref{lll} is fulfilled for $s=1, \ldots , r-1$. Then, by the recurrence assumption,
\begin{eqnarray*} \sum_{i=1}^r\frac{\log p_i}{p_i}&\le  & \log\log\log (p_1\ldots p_{r-1} )+ \frac{\log p_r}{p_r}\, .
  \end{eqnarray*}
Put $H=\sum_{i=1}^{r-1}\log p_i$, $h=\log p_r$. It suffices to show that\begin{eqnarray*} \frac{\log p_r}{p_r}\ =\ \frac{h}{e^h} &\le  & \log\frac{\log \sum_{i=1}^r\log p_i}{\log \sum_{i=1}^{r-1}\log p_i}\ =\ \, \log\frac{\log H+h}{\log H},
  \end{eqnarray*}
  But $H\le h$, by assumption \eqref{prop.p}. Choose $C=e^{\frac{\theta}{(1-\theta)\log 2}}$. Then $H\ge \log p_1\ge e^{\frac{\theta}{(1-\theta)\log 2}}$. The searched inequality thus follows from  \eqref{hH}.

  Let $n= p_1^{\a_1}\ldots p_r^{\a_r}$, where $\a_i\ge 1$ for each $i$. We have  $w( n)\le \log\log\log (p_1\ldots p_r)\le \log\log\log n$.
\end{proof}
\vskip 6 pt


\bigskip

\noindent {\bf Acknowledgements.}
 The author is much grateful to
 an anonymous referee for having let him know the  papers of  Sitaramaiah and Subbarao \cite{SS2,SS1}, and for useful remarks. The author also thank a second referee  for useful remarks.


\begin{thebibliography}{99}
 \bibitem{BW}
  I.  Berkes  and M. Weber, (2012) On series of dilated functions, \emph{Quart. J. Math.} {\bf 65}, no. 1, 25--52.

  \bibitem{D} H. Davenport, (1932)  On a generalization of Euler's function $\phi(n)$,    \emph{J. London Math. Soc.} {\bf 7}, 290--296.
   \bibitem{EH} P. Erd\"os and R. R. Hall, (1978) On some unconventional problems on the divisors of integers,
\emph{J. Austral. Math. Soc.} (Series A) {\bf 25}, 479--485.
   \bibitem{EZ} P. Erd\"os and S. K. Zaremba, (1972) The arithmetical function $\sum_{d|n} \frac{\log d}{d}$,
\emph{Demonstratio Math.} {\bf 6}, Part. 2,  575--579.
\bibitem{Ga} I. S. G\'al,    (1949)
A theorem concerning diophantine approximations,  \emph{Nieuw Arch. Wiskunde} {\bf 23},  13--38.
  \bibitem{Gr} T. H.  Gronwall,   (1912)
Some asymptotic expressions in the theory of numbers,  \emph{Trans. Amer. Math. Soc.} {\bf 8},  118--122.
  \bibitem{MV} H. Montgomery and R. Vaughan, \emph{Multiplicative number theory: I. Classical Theory}, Cambridge Studies in Advanced Math. {\bf 97}, (2006), Cambridge UK.
  \bibitem {RS} J. B. Rosser and  L. Schoenfeld, (1962) Approximate formulas for some functions of prime numbers, \emph{Illinois J. Math.} {\bf 6}(1), 64--94.
  \bibitem {SS1} V. Sitaramaiah and M. V. Subbarao, (2006) Maximal order of certain sums of powers of the logarithmic function, {\sl The Riemann zeta function and related themes: papers in honour of Professor K. Ramachandra, 141--154}, \emph{Ramanujan Math. Soc. Lect. Notes Ser.} {\bf 2}, Ramanujan Math. Soc. Mysore.

  \bibitem {SS2} V. Sitaramaiah and M. V. Subbarao, (1993) The maximal order of certain arithmetic functions, \emph{Indian J. pures appl. Math.} {\bf 24}(b), 347--355.

     \bibitem{W1}   M. Weber, (2016)      An arithmetical approach  to the convergence problem  of series of dilated
                     functions and its connection with the Riemann Zeta function,    \emph{J. Number Th.}   {\bf 162}, 137--179.
\bibitem{W2} M. Weber, (2011) On systems of dilated functions,    \emph{C. R. Acad. Sci.
Paris}, Sec. 1 {\bf 349}, 1261--1263.

  \bibitem{Z}   S. K. Zaremba, (1974) Good lattice points modulo composite numbers,  \emph{Monatshefte f\"ur Math.}   {\bf 78}, 446--460.
                      \end{thebibliography}
 \end{document}